\documentclass[10pt]{amsart}
\input xy
\xyoption{all}

\usepackage{amssymb,epsfig,newlfont,amsmath}

\setlength{\oddsidemargin}{0in} \setlength{\evensidemargin}{0in}
\setlength{\textwidth}{6.5in}

\usepackage[english]{babel}

\usepackage[pagewise]{lineno}

\newtheorem{theorem}{Theorem}[section]
\newtheorem{lemma}[theorem]{Lemma}

\newtheorem{proposition}[theorem]{Proposition}

\newtheorem*{theorem*}{Theorem}
\newtheorem*{lemma*}{Lemma}
\newtheorem*{corollary*}{Corollary}
\newtheorem*{proposition*}{Proposition}

\theoremstyle{definition}

\theoremstyle{remark}
\newtheorem{remark}[theorem]{Remark}

\newtheorem*{example1*}{Example -- the case $H=GL(4)$}
\newtheorem*{example2*}{Example -- the case $H'=GL(2,D)$, $D$ a quaternion division algebra}

\numberwithin{equation}{section}



\newcommand {\Q} {\mathbb Q}
\newcommand {\R} {\mathbb R}
\newcommand {\C} {\mathbb C}
\newcommand {\Z} {\mathbb Z}
\newcommand {\A} {\mathbb A}

\newcommand {\G} {\mathbb G}

\newcommand{\sR}{\mathsf{R}}

\newcommand{\si}{\mathsf{i}}
\newcommand{\sj}{\mathsf{j}}

\title[On arithmetically defined hyperbolic $5$-manifolds]{On arithmetically defined hyperbolic $5$-manifolds arising from maximal orders in definite $\Q$-algebras} 
\begin{document}

\author{Joachim Schwermer}
\address{Faculty of Mathematics, University of Vienna, Oskar-Morgenstern-Platz 1, A-1090 Vienna  \and Max-Planck-Institute 
for Mathematics,  Vivatsgasse 7, D-53111 Bonn, Germany.}
\email{Joachim.Schwermer@univie.ac.at}

\thanks{ The author  gratefully acknowledges the support of the Max-Planck-Institute for Mathematics, 
Bonn, in 2024 when this paper was finalised.}

\subjclass{Primary  11F75, 57K50, 11R52}

\date{\today}

\keywords{Hyperbolic $5$-manifolds, cohomology of arithmetic groups, orders in quaternion division $\Q$-algebras}
\begin{abstract}
Using the quaternionic formalism for the description of the group of isometries of hyperbolic $5$-space we consider arithmetically defined $5$-dimensional  hyperbolic
manifolds which are non-compact but of finite volume. They arise from maximal orders $\Lambda$ in the central simple algebra $M_2(D)$ of degree $4$ where $D$ denotes a definite quaternion $\Q$-algebra.  
The affine $\Z$-group scheme $SL_{\Lambda}$ determines an integral structure for  the algebraic $\Q$-group $G = SL_{\Lambda} \times_{\Z} \Q$ obtained by base change. The group $G$ is an inner form of the special linear $\Q$-group $SL_4$. Each torsion-free subgroup $\Gamma \subset SL_{\Lambda}(\Z)$ determines a hyperbolic $5$-manifold, to be denoted $X_G/\Gamma$. Given a principal congruence 
 subgroup $\Gamma(\frak{p}^e)$, we determine the number of ends and the dimensions  of the cohomology groups at infinity of the manifold $X_G/\Gamma(\frak{p}^e)$.
\end{abstract}

\maketitle

\section{introduction}The following fact is a special case of a general result regarding the  nature  of quadratic spaces of dimension $6$ over a field $K$ with trivial Arf invariant and the corresponding spin groups.
Given such a quadratic $K$-space $(V, q)$   which represents 1 and whose Witt index is one, there exists a quaternion division $K$-algebra $B$ with norm form $n_B$ such that $(V, q)$ is similar to $H(K) \perp (B, n_B)$ where $H(K)$ denotes the hyperbolic plane. The spin group  
$\mathrm{Spin}(V, q)$ is isomorphic to the special linear group of degree two over $B$ (see \cite{Tama}, \cite[Chap. V, Sect. 5.6]{Knus}). In the case $K = \Q$, and $B$ a  definite quaternion $\Q$-algebra, this relation discloses an approach to arithmetically defined hyperbolic $5$-manifolds which commences from maximal orders in $B$. 
\footnote{This point of view reminds  one of arithmetic hyperbolic $3$-manifolds via Bianchi groups attached to the ring of integers of an imaginary quadratic number field
 (see, e.g. \cite{MacReid}).}
 These hyperbolic $5$-manifolds are non-compact but of finite volume. In this paper we illustrate in a simple higher dimensional case the utility of an approach via simple algebras to the study of the geometry and cohomology of arithmetic quotients.

We consider   locally symmetric spaces $X_G/\Gamma$ associated with arithmetic subgroups $\Gamma$ of the special linear algebraic $\Q$-group $G = SL_{M_2(D)}$, attached to a quaternion division $\Q$-algebra $D$. The group $G$ is an inner form of $SL_4$, and $G$ is $\Q$-simple,  of $\Q$-rank one, and non-split over $\Q$. 
 If the quaternion division $\Q$-algebra $D$ is  definite, then $X_G$ is hyperbolic $5$-space, otherwise $X_G$ is the $9$-dimensional symmetric space attached to the real Lie group $SL_4(\R)$.
 
 Given a maximal order $\Lambda_D$ in $D$, the corresponding order $\Lambda = M_2(\Lambda_D)$ defines a $\Z$-group scheme $SL_{\Lambda}$. Via base change, we have 
 $G = SL_{\Lambda} \times_{\Z} \Q = SL_{M_2(D)}$, that is, the order $\Lambda$ provides an integral structure for $G$. The group
$\Gamma_{\Lambda}: = SL_{\Lambda}(\Z)$ is an arithmetic subgroup of $G(\Q)$. Any subgroup $\Gamma$ of $\Gamma_{\Lambda}$ of finite index acts properly on the  symmetric space $X_G$, and, if $\Gamma$ is torsion-free, the quotient $X_G/\Gamma$ carries the structure of a complete Riemannian manifold of finite volume but non-compact.
 As a result of reduction theory, there exists  an open subset $Y_{\Gamma} \subset X_G/\Gamma$ such that its closure $\overline{Y}_{\Gamma}$ is a compact manifold with boundary $\partial \overline{Y}_{\Gamma}$, and the inclusion $\overline{Y}_{\Gamma} \longrightarrow X_G/\Gamma$ is a homotopy equivalence. 

We   analyse the structure  of the connected components  $Y^{[P]}$ of the boundary $\partial \overline{Y}_{\Gamma}$, parametrised by  the finite set $\frak{P}/\Gamma$ of 
$\Gamma$-conjugacy classes of minimal parabolic $\Q$-subgroups of $G$. If $D$ is definite, $\partial \overline{Y}_{\Gamma}$ is the disjoint union of $4$-dimensional real tori.
If $D$ is an indefinite $\Q$-algebra,  a connected component $Y^{[P]}$ of  $\partial \overline{Y}_{\Gamma}$ is diffeomorphic to the total space of a fibre bundle whose base space 
 is diffeomorphic to an arithmetically defined compact quotient of the product $\mathrm{H}^2 \times \mathrm{H}^2$ of two upper half planes 
and whose  compact fibres are equal to  $4$-dimensional tori (see Proposition \ref{boundary}).

 Next, given a  principal congruence subgroup $ \Gamma_{\Lambda}(\frak{p}^e)$ of $G(\Q)$, $\frak{p}^e$ a power of a prime ideal $\frak{p}$,
we determine the number $\mathrm{cusp}(G, \Gamma_{\Lambda}(\frak{p}^e)): =  \frak{P}/\Gamma_{\Lambda}(\frak{p}^e)$ of  $\Gamma_{\Lambda}(\frak{p}^e)$-conjugacy classes of minimal parabolic $\Q$-subgroups of $G$, that is, the number of  connected components of the  boundary $\partial \overline{Y}_{\Gamma_{\Lambda}(\frak{p}^e)} $ of the compact manifold $\overline{Y}_{\Gamma_{\Lambda}(\frak{p}^e)}$.

  \begin{proposition}
  Let $D$ be  a   central simple  division $\Q$-algebra of degree two, and let $h_D$ be the class number of $D$. Let  $G = SL_{\Lambda} \times_{\Z} \Q$ be the  algebraic $\Q$-group attached to a maximal order $\Lambda = M_2(\Lambda_D)$ in the central simple $\Q$-algebra $M= M_2(D)$. Then, given a  principal congruence subgroup $ \Gamma_{\Lambda}(\frak{p}^e)$ of $G(\Q)$, $\frak{p}^e$ a power of a prime ideal $\frak{p}$,

\begin{itemize}
\item[-]{if $D$ is a definite $\Q$-algebra, we have $\mathrm{cusp}(G, \Gamma_{\Lambda}(\frak{p}^e))  = h^2_D N(\frak{p^e})^{-4} [\Gamma_{\Lambda} : \Gamma_{\Lambda}(\frak{p^e})]$.}
\item[-]{if $D$ is an indefinite $\Q$-algebra, we have $\mathrm{cusp}(G, \Gamma_{\Lambda}(\frak{p}^e))  = h_D \mu_{\Lambda_D}(\frak{p}^e)^{-1} N(\frak{p}^e)^{-4} [\Gamma_{\Lambda} : \Gamma_{\Lambda}(\frak{p}^e)]$.}
\end{itemize}
where $\mu_{\Lambda_D}(\frak{p}^e)$ denotes the cardinality of the finite group  $\Lambda^{\times}_D/\Lambda^{\times}_D(\frak{p}^e)$, i.e. the units in $\Lambda_D$  modulo the elements which are congruent to one modulo $\frak{p}^e$.
\end{proposition}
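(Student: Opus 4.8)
The plan is to translate the count of ends into an adelic double-coset count, to reduce the latter to a class-number problem for a minimal parabolic subgroup, and then to read off the answer from the arithmetic of the maximal order $\Lambda_D$ — a higher-dimensional quaternionic analogue of the classical computation of the number of cusps of a congruence Bianchi group. First, since $G$ is connected, $\Q$-simple and of $\Q$-rank one, the set $\mathfrak{P}$ of minimal parabolic $\Q$-subgroups is a single $G(\Q)$-conjugacy class (Borel--Tits), and such a subgroup $P$ is self-normalising; hence $\mathfrak{P}\cong G(\Q)/P(\Q)$, so that for every arithmetic subgroup $\Gamma$
$$\mathrm{cusp}(G,\Gamma)=\#\bigl(\Gamma\backslash G(\Q)/P(\Q)\bigr),$$
and concretely $G(\Q)/P(\Q)$ may be identified with the set $\mathbb{P}^1(D)$ of right $D$-lines in $D^2$.

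Next I would adelise. Write $\Gamma=\Gamma_\Lambda(\mathfrak{p}^e)=G(\Q)\cap K_f$, where $K_f\subset G(\A_f)$ is the compact open subgroup equal to $SL_\Lambda(\Z_v)$ at every place $v$ away from $\mathfrak{p}$ and to the principal congruence subgroup of level $\mathfrak{p}^e$ at the prime below $\mathfrak{p}$. Since $G=SL_1(M_2(D))$ is simply connected and $G(\R)=SL_2(D_{\R})$ is non-compact, strong approximation gives $G(\A_f)=G(\Q)K_f$, hence $\Gamma\backslash G(\Q)\cong K_f\backslash G(\A_f)$, and therefore
$$\Gamma\backslash G(\Q)/P(\Q)\;\xrightarrow{\ \sim\ }\;P(\Q)\backslash G(\A_f)/K_f.$$
Now project onto $P(\A_f)\backslash G(\A_f)/K_f$. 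By the Iwasawa decomposition $G(\Q_v)=P(\Q_v)\,SL_\Lambda(\Z_v)$ — valid at every finite $v$, including the ramified primes of $D$, since there $SL_2(\mathcal O_{D,v})$ acts transitively on $\mathbb{P}^1(D_v)=\mathbb{P}^1(\mathcal O_{D,v})$ — this base space is a single point at every place except the one below $\mathfrak{p}$, where it is the finite set $\overline{P}\backslash SL_\Lambda(\Z/\mathfrak{p}^e)$ with $\overline{P}$ the reduction of the integral Borel. Each fibre of the projection is a class set of $P$; since the relevant quotient $N(\Q)\backslash N(\A_f)/(\text{open compact})$ is trivial (one checks that $\Q\backslash\A_f/(\text{open compact})$ is a point, as $\A_f=\Q+L$ for each such $L$), it reduces to a class set of the Levi $M$, the subgroup of $GL_1(D)\times GL_1(D)$ cut out by requiring the product of the reduced norms to be trivial. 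As the level $\mathfrak{p}^e$ is principal, hence normal, all fibres carry the same level, so that
$$\mathrm{cusp}\bigl(G,\Gamma_\Lambda(\mathfrak{p}^e)\bigr)=\#\bigl(\overline{P}\backslash SL_\Lambda(\Z/\mathfrak{p}^e)\bigr)\cdot\#\bigl(M(\Q)\backslash M(\A_f)/K_M(\mathfrak{p}^e)\bigr).$$

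For the Levi class set I would use the exact sequence $1\to SL_1(D)\to M\to D^\times\to 1$. The quotient $D^\times$ contributes the set of right ideal classes of $\Lambda_D$, of cardinality $h_D$. The kernel $SL_1(D)$ is the decisive point: if $D$ is indefinite then $SL_1(D_{\R})\cong SL_2(\R)$ is non-compact, so $SL_1(D)$ has strong approximation and its class set is a single point; if $D$ is definite then $SL_1(D_{\R})=SL_1(\mathbb{H})\cong S^3$ is compact, strong approximation fails, and — invoking Hasse--Schilling, by which a positive rational is a reduced norm from $D^\times$ precisely when it is a unit at every finite place — the $SL_1(D)$-class set is again in bijection with the set of ideal classes. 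Hence at full level the Levi class set has cardinality $h_D$ when $D$ is indefinite and $h_D^2$ when $D$ is definite, which settles $e=0$. For general $\mathfrak{p}^e$ one evaluates the two factors by a local computation at $\mathfrak{p}$: the first is a parabolic index modulo $\mathfrak{p}^e$, contributing a power of $N(\mathfrak{p}^e)$ from the unipotent radical (which has $\Q$-dimension four) together with the reduced-norm-one part of $M$ over $\Lambda_D/\mathfrak{p}^e$; the second deepens the Levi class set, multiplying by the local index of $M$ and dividing by the image of the global unit group $\Lambda_D^\times$ modulo $\mathfrak{p}^e$. Inserting $[\Gamma_\Lambda:\Gamma_\Lambda(\mathfrak{p}^e)]=|SL_\Lambda(\Z/\mathfrak{p}^e)|$ and cancelling the local orders of $M$ leaves $N(\mathfrak{p}^e)^{-4}\,[\Gamma_\Lambda:\Gamma_\Lambda(\mathfrak{p}^e)]$ together with the residual unit contribution: this is $\mu_{\Lambda_D}(\mathfrak{p}^e)^{-1}$ when $D$ is indefinite (so $\Lambda_D^\times$ is infinite), whereas when $D$ is definite the second copy of the ideal class set furnished by $SL_1(D)$ at every level absorbs the unit correction and one is left with the coefficient $h_D^2$.

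The heart of the argument, and the step I expect to be the main obstacle, is the treatment of the Levi class set: proving the $h_D^2$ in the non-Eichler (definite) case rests on hereditary-order module theory together with Hasse--Schilling, and then one needs the careful bookkeeping that converts the local data at $\mathfrak{p}$ into $N(\mathfrak{p}^e)^{-4}[\Gamma_\Lambda:\Gamma_\Lambda(\mathfrak{p}^e)]$ and places the factor $\mu_{\Lambda_D}(\mathfrak{p}^e)^{-1}$ exactly in the indefinite case. The passages of the first two steps are routine once the Iwasawa decomposition at the ramified primes of $D$ is in place.
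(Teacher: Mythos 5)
Your overall strategy coincides with the paper's: both pass from $\Gamma_{\Lambda}(\frak{p}^e)\backslash G(\Q)/P_0(\Q)$ to adelic double cosets via strong approximation, use the Iwasawa decomposition $G(\A_f)=K_fP_0(\A_f)$ to reduce to a class-set computation for the minimal parabolic, dispose of the unipotent radical by its strong approximation, and feed in the full-level counts $h_D^2$ (definite) resp.\ $h_D$ (indefinite). The paper simply quotes those full-level counts from the preceding Proposition (Krafft--Osenberg, Koch, Lacher) and then performs a single orbit--stabiliser count: the finite group $K_f/K_f(\frak{p}^e)$ acts on $K_f(\frak{p}^e)\backslash G(\A_f)/P_0(\Q)$ with orbit space the full-level double coset set, and the formula drops out once the stabilisers are identified as the additive group $\Lambda_D/\frak{p}^e\Lambda_D$ in the definite case, resp.\ its semidirect product with $\Lambda_D^{\times}/\Lambda_D^{\times}(\frak{p}^e)$ in the indefinite case. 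Your fibration over the local flag variety at $\frak{p}$, with the Levi class set as fibre, is an equivalent bookkeeping of the same computation, and your re-derivation of the full-level count replaces the paper's citation.

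The genuine gap sits exactly where you predict it: the definite/indefinite dichotomy in the unit contribution. The sentence ``the second copy of the ideal class set furnished by $SL_1(D)$ at every level absorbs the unit correction'' is an assertion, not an argument, and it is the only point at which the two displayed formulas differ; without it your bookkeeping would produce a spurious factor $\mu_{\Lambda_D}(\frak{p}^e)^{-1}$ in the definite case as well. The paper's mechanism for this is the concrete identification of the stabilisers quoted above: in the definite case $\Lambda_D^{\times}$ is a finite group of reduced norm $+1$ whose effect is already absorbed into the full-level count $h_D^2$, so only the unipotent part $\Lambda_D/\frak{p}^e\Lambda_D$ survives in the stabiliser, whereas in the indefinite case $\Lambda_D^{\times}$ is infinite and its reduction modulo $\frak{p}^e$ genuinely enlarges the stabiliser by $\mu_{\Lambda_D}(\frak{p}^e)$. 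Until you prove a statement of that precision, the computation does not close. Two smaller defects: the Hasse--Schilling norm theorem is misquoted (it characterises reduced norms from $D^{\times}$ as the elements positive at the real places where $D$ ramifies, not as elements that are ``units at every finite place''), and passing from the exact sequence $1\to SL_1(D)\to M\to D^{\times}\to 1$ to a product decomposition of class sets requires an argument, since class sets are pointed sets rather than groups and such a sequence need not multiply cardinalities.
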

Explicit formulas for the index $[\Gamma_{\Lambda} : \Gamma_{\Lambda}(\frak{p})]$ are given in Lemma \ref{index}; these depend on the splitting behaviour of the prime ideal $\frak{p}$ in $D$. In the first case, families of examples of definite $\Q$-algebras are given in Section \ref{examples}, including  their class number $h_D$.
 \vspace{0,3cm}
 
 Given $\Gamma \subset \Gamma_{\Lambda}$ of finite index, the  pair $(\overline{Y}_{\Gamma}, \partial \overline{Y}_{\Gamma})$ gives rise to 
 the  long exact sequence in cohomology 
$$
\longrightarrow H^q(\overline{Y}_{\Gamma}, \partial \overline{Y}_{\Gamma}, \C) \longrightarrow 
H^q(\overline{Y}_{\Gamma}, \C)  \overset{r^q}{\longrightarrow} H^q(\partial \overline{Y}_{\Gamma}, \C) \longrightarrow  H^{q+1}(\overline{Y}_{\Gamma}, \partial \overline{Y}_{\Gamma}, \C) \longrightarrow
$$
where $r{^q}: H^q(\overline{Y}_{\Gamma}, \C)  \longrightarrow H^q(\partial \overline{Y}_{\Gamma}, \C)$ denotes the natural homomorphism obtained by restriction.
In view of the isomorphism  $H^q(\overline{Y}_{\Gamma}, \C) \cong H^q(X_G/\Gamma, \C)$, the dimension of the terms 
$$\sR^q(\Gamma_{\Lambda}(\frak{p}^e)): = \mathrm{im}(r^q: H^q(\overline{Y}_{\Gamma}, \C)  \longrightarrow H^q(\partial \overline{Y}_{\Gamma}, \C))$$
gives information about  the size of the ``cohomology of $\Gamma$ at infinity", that is, the space of classes in  $H^q(X_G/\Gamma, \C)$ which restrict non-trivially to 
$H^q(\partial \overline{Y}_{\Gamma}, \C)$.

 In the case of a definite quaternion $\Q$-algebra we obtain the following result.
 \begin{theorem}Let $D$ be a definite quaternion $\Q$-algebra. Let $G = G_{\Lambda} \times_{\Z} \Q$ be the algebraic $\Q$-group associated with a maximal order $\Lambda$ in the central simple $\Q$-algebra $M_2(D)$.
 Given the  principal congruence subgroup $\Gamma_{\Lambda}(\frak{p}^e) \subset \Gamma_{\Lambda} = G_{\Lambda}(\Z)$, $\frak{p}^e$ a power of a  prime ideal $\frak{p}$, 
 we write 
 $\sR^q(\Gamma_{\Lambda}(\frak{p}^e)): =  \mathrm{im}(r^q: H^q(\overline{Y}_{\Gamma}, \C)  \longrightarrow H^q(\partial \overline{Y}_{\Gamma}, \C))$ for the  image of the map $r^q$. Then we have 
\begin{itemize}
\item[-] {$\dim \sR^0(\Gamma_{\Lambda}(\frak{p}^e)) = 1$, and $\sR^4(\Gamma_{\Lambda}(\frak{p}^e))$ is of codimension one in $H^*(\partial \overline{Y}_{\Gamma_{\Lambda}(\frak{p}^e)}, \C)$}
\item[-] {$\dim \sR^1(\Gamma_{\Lambda}(\frak{p}^e)) + \dim \sR^3(\Gamma_{\Lambda}(\frak{p}^e)) = \dim H^3(\partial \overline{Y}_{\Gamma_{\Lambda}(\frak{p}^e)}) =
4 h_D^2 N(\frak{p}^e)^{-4} [\Gamma_{\Lambda} : \Gamma_{\Lambda}(\frak{p}^e)]$}
\item[-] {$\dim \sR^2(\Gamma_{\Lambda}(\frak{p}^e))  = \frac{1}{2} \dim H^2(\partial \overline{Y}_{\Gamma_{\Lambda}(\frak{p}^e)}) =
3 h_D^2 N(\frak{p}^e)^{-4} [\Gamma_{\Lambda} : \Gamma_{\Lambda}(\frak{p}^e)]$}
\end{itemize}
 \end{theorem}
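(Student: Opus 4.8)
The plan is to reduce the statement to the topology of the compact oriented $5$-manifold with boundary $\overline{Y}_{\Gamma}$, where $\Gamma := \Gamma_{\Lambda}(\frak{p}^e)$, feeding in only the description of its boundary from Proposition \ref{boundary} and the cusp count from the preceding Proposition; no further knowledge of $H^{*}(\overline{Y}_{\Gamma},\C)$ is needed. Write $c := \mathrm{cusp}(G,\Gamma_{\Lambda}(\frak{p}^e)) = h_D^2 N(\frak{p}^e)^{-4}[\Gamma_{\Lambda}:\Gamma_{\Lambda}(\frak{p}^e)]$. Since $D$ is definite, Proposition \ref{boundary} gives that $\partial\overline{Y}_{\Gamma}$ is a disjoint union of $c$ four-dimensional real tori, so its Betti numbers are $\dim H^q(\partial\overline{Y}_{\Gamma},\C) = c, 4c, 6c, 4c, c$ for $q = 0,\dots,4$; in particular $\dim H^3(\partial\overline{Y}_{\Gamma},\C) = 4c$ and $\dim H^2(\partial\overline{Y}_{\Gamma},\C) = 6c$.

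The degree-zero assertion is immediate: $\overline{Y}_{\Gamma}$ is connected, being homotopy equivalent to the connected manifold $X_G/\Gamma$, so $H^0(\overline{Y}_{\Gamma},\C)=\C$ and $r^0$ carries the constant function to the nonzero tuple of constants on the components of $\partial\overline{Y}_{\Gamma}$; hence $\dim\sR^0(\Gamma) = 1$.

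The heart of the argument is the classical ``half lives, half dies'' principle. As $X_G$ is hyperbolic $5$-space and $\Gamma \subset \Gamma_{\Lambda}$ acts by orientation-preserving isometries, $\overline{Y}_{\Gamma}$ is a compact oriented $5$-manifold with boundary, and $\partial\overline{Y}_{\Gamma}$ is an oriented closed $4$-manifold. Lefschetz duality for the pair $(\overline{Y}_{\Gamma},\partial\overline{Y}_{\Gamma})$ identifies $H^q(\overline{Y}_{\Gamma},\C)$ with $H_{5-q}(\overline{Y}_{\Gamma},\partial\overline{Y}_{\Gamma};\C)$, Poincar\'e duality identifies $H^q(\partial\overline{Y}_{\Gamma},\C)$ with $H_{4-q}(\partial\overline{Y}_{\Gamma},\C)$, and under these isomorphisms $r^q$ corresponds, up to sign, to the connecting homomorphism $\partial\colon H_{5-q}(\overline{Y}_{\Gamma},\partial\overline{Y}_{\Gamma};\C)\to H_{4-q}(\partial\overline{Y}_{\Gamma},\C)$ in the homology long exact sequence of the pair. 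Exactness of that sequence at $H_{4-q}(\partial\overline{Y}_{\Gamma})$ gives $\dim\mathrm{im}(\partial) = \dim H_{4-q}(\partial\overline{Y}_{\Gamma},\C) - \dim\mathrm{im}\bigl(i_{*}\colon H_{4-q}(\partial\overline{Y}_{\Gamma}) \to H_{4-q}(\overline{Y}_{\Gamma})\bigr)$; over the field $\C$ the map $i_{*}$ is dual to $r^{4-q}$, and $\dim H_{4-q}(\partial\overline{Y}_{\Gamma},\C) = \dim H^q(\partial\overline{Y}_{\Gamma},\C)$ by Poincar\'e duality on the boundary. Combining, one obtains
$$\dim\sR^q(\Gamma) + \dim\sR^{4-q}(\Gamma) = \dim H^q(\partial\overline{Y}_{\Gamma},\C),\qquad 0\le q\le 4.$$

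Combining this identity with $\dim\sR^0(\Gamma)=1$ and the boundary Betti numbers finishes the proof: with $q=0$ we get $\dim\sR^4(\Gamma) = c-1$, so $\sR^4(\Gamma)$ is of codimension one in $H^4(\partial\overline{Y}_{\Gamma},\C)\cong\C^c$; with $q=1$ we get $\dim\sR^1(\Gamma) + \dim\sR^3(\Gamma) = \dim H^1(\partial\overline{Y}_{\Gamma},\C) = 4c = \dim H^3(\partial\overline{Y}_{\Gamma},\C)$; and with $q=2$ we get $2\dim\sR^2(\Gamma) = \dim H^2(\partial\overline{Y}_{\Gamma},\C) = 6c$, i.e.\ $\dim\sR^2(\Gamma) = 3c$. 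Substituting $c = h_D^2 N(\frak{p}^e)^{-4}[\Gamma_{\Lambda}:\Gamma_{\Lambda}(\frak{p}^e)]$ yields the stated numbers. The only delicate point is the precise formulation and sign bookkeeping of the duality square underpinning ``half lives, half dies'' (together with the attendant orientability checks on $\overline{Y}_{\Gamma}$ and $\partial\overline{Y}_{\Gamma}$); the remaining input is either the arithmetic of the cusp count, imported verbatim from the preceding Proposition, or a formal consequence of the two long exact sequences. Note that the method deliberately does not separate $\dim\sR^1(\Gamma)$ from $\dim\sR^3(\Gamma)$, in accordance with the theorem asserting only their sum.
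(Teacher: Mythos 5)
Your proposal is correct and follows essentially the same route as the paper: the duality identity $\dim\sR^q+\dim\sR^{4-q}=\dim H^q(\partial\overline{Y}_{\Gamma},\C)$ you derive from the Lefschetz-duality ladder is precisely the paper's Proposition \ref{image} (proved there by exhibiting $\sR^*$ as a maximal isotropic subspace for the cup-product pairing on the boundary, an equivalent form of the same ``half lives, half dies'' argument), and the remaining steps --- the cusp count, the Betti numbers of the $4$-tori, and the codimension-one statement for $\sR^4$ --- match the paper's bookkeeping. The only cosmetic difference is that the paper obtains the codimension-one claim from $H^{5}(\overline{Y}_{\Gamma},\partial\overline{Y}_{\Gamma},\C)\cong H_0(\overline{Y}_{\Gamma},\C)$ directly, whereas you read it off from the $q=0$ case of the duality identity.
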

 In Section \ref{examples} we give some explicit examples, in particular, some with class number $h_D = 1$.
 
  With regard to  an indefinite quaternion $\Q$-algebra, we refrain from working out this topological approach to the cohomology at infinity.   
 In this case, the interpretation of $H^q(X_G/\Gamma, \C)$ in terms of automorphic forms, in particular, the use of Eisenstein series (see, e.g., \cite{GrSch}, \cite{S2}), gives far better insight.
see Remarks \ref{indef} and \ref{Eis}.

\section*{Notation and conventions} 

(1)  Let $k$ be an algebraic number field,  and let $\mathcal{O}_k$ denote its ring of integers.
The set of places of $k$ will be denoted by $V_k$, and  $V_{k, \infty}$ (resp. $V_{k, f}$) refers to the subsets  of archimedean (resp. non-archimedean) places of $k$. Given a place $v \in V_k$, the
  completion of $k$ with respect to the absolute value $\vert \cdot \vert$ corresponding to $v$ is denoted by $k_v$. For a finite place $v \in V_{k, f}$ we write $\mathcal{O}_{k,v}$ for the valuation ring in $k_{v}$. The unique maximal ideal in $\mathcal{O}_v$ is denoted by $\frak{P}_v$. 
   
    Let $\mathbb{A}_k$ (resp.~$\mathbb{I}_k$) be the ring of
ad\`{e}les (resp.~the group of id\`{e}les) of $k$. We denote by $\A_{k, \infty} = \prod_{v \in V_{k, \infty}}k_v$ the archimdean component of the ring $\A_k$, and by 
$\mathbb{A}_{k,f}$ the finite ad\`{e}les of $k$. There is the usual decomposition of $\A_k$  into the archimedean and the non-archimedean part $\A = \A_{k, \infty} \times \A_{k, f}$. 

(2) Let $A$ be a commutative ring with identity. By definition, an algebraic $A$-group $G$ is an affine $A$-group scheme that is of finite type as an affine scheme over $A$. For every commutative associative $A$-algebra $R$ we write $G(R)$ for the group of $R$-valued points of $G$.
If $A = K$ is a field, we additionally suppose the defining condition that $G$ is smooth.

\section{An inner form of the algebraic $\Q$-group $SL_4$}\label{schemes} 
Given the  algebraic number field $k = \Q$ of rational numbers with ring of integers $\mathcal{O}_k = \Z$, let 
 $D$ be a  central simple  $\Q$-algebra of degree two, that is, $\dim_{\Q} D = 4$.
We associate with a given maximal $\Z$-order $\Lambda$ in the central simple $\Q$-algebra $M: = M_2(D)$ an affine $\Z$-group scheme $SL_{\Lambda}$ of finite type. 
One obtains an integral structure on the special linear $\Q$-group $SL_M = SL_{\Lambda} \times_{\Z} \Q$.   The group $G: = SL_M$ is a $\Q$-simple simply connected group of $\Q$-rank one. It is an inner form of the algebraic $\Q$-group $SL_4$.

\subsection{Central simple algebras of degree two}
Given any field $F$ of characteristic $\mathrm{char} F = 0$, let $Q$ be a quaternion $F$-algebra, that is, a central simple $F$-algebra of degree two. 
In terms of generators and relations, the $F$-algebra $Q$, viewed as an $F$-vector space, has a basis $\mathsf{1}, \si, \sj, \si \sj$ subject to the relations $\si^2 = a, \sj^2 = b, \si \sj = - \sj \si$ for some elements $a, b \in F^{\times}$. The entire multiplication rules for $Q$ are determined by the relations as given, linearity and associativity.
Although the $F$-algebra does not uniquely determine the elements $a, b \in F^{\times}$ we may use the notation $Q = Q(a, b \vert F)$, thereby emphasising the choice of $a, b \in F^{\times}$.

A given quaternion $F$-algebra $Q$ is either isomorphic to the $F$-algebra $M_2(F)$ of $(2 \times 2)$-matrices with entries in $F$ or a central division $F$-algebra. 
The $F$-algebra $Q$ is endowed with the reduced norm, a multiplicative map $\mathrm{nrd}_{Q/F}: Q \longrightarrow F$ (see \cite[Sect. 9]{Re}). In fact, the reduced norm is a quadratic form on $Q$. The reduced norm of a given element $x \in Q$ vanishes if and only if $x$ is not invertible. Therefore, if $Q$ is a division algebra the reduced norm of $x \in Q$ vanishes if and only if $x = 0$. The reduced trace, to be denoted 
 $\mathrm{red\; tr}_{Q/F}$,  is a linear form on $Q$.
The reduced characteristic polynomial of a given element $q \in Q$ has the form
\begin{equation}
\mathrm{red} \chi_{q, Q/F} = X^2 - \mathrm{red\; tr}_{Q/F}(q) X + \mathrm{nrd}_{Q/F}(q).
\end{equation}

\subsection{Ramification} Let $k$ be an algebraic number field, and let $\mathcal{O}_k$ denote its ring of integers. Let 
 $D$ be a  central simple  $k$-algebra of degree two.
Given a place $v \in V_{k}$, the local analogue $D_v = D \otimes_k k_v$ is a central simple $k_v$-algebra of degree two, that is, $\dim_{k_v}D_v = 4$. If $v \in V_{k, \infty}$ is a complex place, we have $D_v \cong M_2(\C)$.
 If $v \in V_{k, \infty}$ is a real place, the $\R$-algebra $D_v$ is either isomorphic to $M_2(\R)$ or the division algebra $\mathbb{H} = Q(-1, -1 \vert \R)$ of Hamilton quaternions.  A similar dichotomy exists in the case of a non-archimedean place $v \in V_{k, f}$. 
 For each local field $k_v$, $v \in V_{k, f}$, there is, up to isomorphism,  a unique quaternion division algebra $C_v$ over $k_v$.  Using the unique unramified quadratic extension $k_v(\sqrt{\alpha})/k_v$ where $\alpha$ is a unit in the valuation ring $\mathcal{O}_v$ of $k_v$, $C_v$ can be constructed as a cyclic algebra. Therefore,
 the quaternion algebra $D_v$ is isomorphic either to $M_2(k_v)$ or the unique division $k_v$-algebra  $C_v$. 

We say that $D$ ramifies at a place $v \in V_k$, or that $v$ is ramified in $D$, if $D_v$ is isomorphic to a division algebra, otherwise $D$ splits at
$v \in V_k$. A given central simple $k$-algebra $D$ of degree two splits at all but a finite number of places, and the set $\mathrm{Ram}(D) = \{v \in V_k\; \vert \; D \; \text{ramifies at}\;  v \in V_k \}$
has even cardinality. The isomorphism class of the algebra $D$ over $k$ is determined by the ramification set $\mathrm{Ram}(D)$. Furthermore, given a set of places
$S \subset V_k$ where $S$ has even cardinality, there exists a quaternion $k$-algebra with ramification set equal to $S$.

We call a central simple $k$-algebra $D$ of degree two a totally definite quaternion algebra if $D$ ramifies at each  archimedean place $v \in V_{k, \infty}$, that is, 
$D_v \cong \mathbb{H}$. Consequently, every archimedean place of $k$ is real place. These central simple $k$-algebras play the role of exception in the theory of simple algebras over number fields, in particular, with regard to orders (For details we refer to \cite[Section 34]{Re}).

We say that a central simple $k$-algebra $D$ is indefinite if $D$ is not totally definite.

\subsection{The group $SL_M$ and its parabolic subgroups} 
 Let $D$ be a central simple  $\Q$-algebra  of degree two, and let $M: = M_2(D)$ be the central simple $\Q$-algebra of $(2 \times 2)$-matrices with entries in $D$.
 Let  $GL_M$ be the affine $\Q$-group scheme of finite type associated with the assocoative $\Q$-algebra $M$ (see \cite[Sect. 8.3]{Schbook}). This is a connected algebraic $\Q$-group whose group of  $\Q$-rational points is  the group $GL(2, D)$ of $(2 \times 2)$-matrices with entries in $D$.
We fix a maximal $\Q$-split torus $S$ in $GL_M$ subject to the condition
$
S(\Q) = \left \{ g =  \bigl ( \begin{smallmatrix}
\lambda & 0 \\
0 & \mu \end{smallmatrix} \bigr ) \; \vert \; \lambda, \mu  \in
\Q^{\times}1_{D} \right \}.
$
 The
non-trivial character $\alpha: S/\Q \rightarrow \mathbb{G}_{m}/\Q$, defined
by the assignment $ \bigl (
\begin{smallmatrix}
\lambda & 0 \\
0 & \mu \end{smallmatrix} \bigr ) \mapsto \lambda \mu ^{-1}$, forms a basis for the set of roots of $GL_M$.
We denote by $Q_{0}$ the minimal parabolic $\Q$-subgroup of $GL_M$ which is determined by $\{ \alpha\}$. We have a Levi decomposition of $Q_{0}$ into the semi-direct product $Q_0 = Z_{GL_M}(S) N_0$ of its unipotent radical $N_0$ by
 the centraliser $L_{Q_0} = Z_{GL_M}(S)$ of $S$.
The group of $\Q$-rational points of the centraliser $Z_{GL_M}(S)$ of $S$ is given by
 $Z_{GL_M}(S)(\Q) = \left \{ g =  \bigl ( \begin{smallmatrix}
x & 0 \\
0 & y \end{smallmatrix} \bigr ) \; \vert \; x, y  \in
D^{\times} \right \}.
$

The determinant defines a homomorphism of algebraic $\Q$-groups $\det: GL_M \longrightarrow \G_m$ into the multiplicative group. We call its scheme-theoretic kernel the special linear group of
$M$, to be denoted $SL_M$ (cf. \cite[Examples A 1.8.]{Schbook}).
The group $SL_M$ is a $\Q$-simple simply connected algebraic group of $\Q$-rank one. We fix the maximal $\Q$-split torus $T$ of $SL_M$, whose $\Q$-rational points are equal to $T(\Q) = SL_A(\Q) \cap S(\Q)$, hence, 
$
T(\Q) = \left \{ g =  \bigl ( \begin{smallmatrix}
\lambda & 0 \\
0 & \lambda^{-1} \end{smallmatrix} \bigr ) \; \vert \; \lambda  \in
\Q^{\times}1_{D} \right \}.
$
 The restriction of the root $\alpha$ of $GL_M$ to $T$, denoted by the same letter, is a basis for the set of roots for $SL_M$ with respect to $T$. We denote 
 the minimal parabolic $\Q$-subgroup which corresponds to $\alpha$ by $P_0$. It has a Levi decomposition as a semi-direct product $P_0 = Z_{SL_M}(T) N_0$ of its unipotent radical $N_0$ by
 the centraliser $L_0 = Z_{SL_M}(T)$ of $T$. 
 We denote by $M_0$
  the connected component of the group $(\cap_{\chi \in X_{\Q}(L_0)} \ker \chi)$.
 The $\Q$-rational points of $L_0 = Z_{SL_M}(T)$ resp. $M_0$ are given by
 $$L_0(\Q) = \left \{ g =  \bigl ( \begin{smallmatrix}
x & 0 \\
0 & x^{-1} \end{smallmatrix} \bigr ) \; \vert \; x  \in
D^{\times} \right \} \quad \text{resp.} \quad M_0(\Q) = \left \{ g =  \bigl ( \begin{smallmatrix}
x & 0 \\
0 & x^{-1} \end{smallmatrix} \bigr ) \; \vert \; x  \in
D^{\times}, \mathrm{nrd}_{D/\Q}(x) = 1 \right \}.
$$
The group $M_0$  is the largest connected anisotropic subgroup of $L_0$, the intersection $M_0(\Q) \cap T(\Q)$ is finite, and $L_0 = M_0 T$. Thus, we have $P_0 = M_0 T N_0$.
We call $P_0$ the standard minimal parabolic subgroup of $SL_M$.

\subsection{Orders in a central simple $\Q$-algebra.}\label{orders} Let $A$ be a central simple $\Q$-algebra of degree two.
By definition, a $\Z$-order $\Delta$ in $A$ is a subring of $A$ with 
$1_{\Delta} = 1_A$ and such that $\Delta$ is a complete $\Z$-lattice in $A$. 
For each $x \in \Delta$ the reduced characteristic polynomial $\mathrm{red} \chi_{x, A/k}$ has coefficients in $\Z$. In particular, given the reduced norm
$
\mathrm{nrd}_{A/k}: A \longrightarrow k,
$
we have $\mathrm{nrd}_{A/k}(x) \in \Z$. Every  finite-dimensional central simple $k$-algebra $A$ contains $\Z$-orders, and each of them is contained 
in a maximal $\Z$-order. 

A left $\Delta$-ideal  in $A$ is a left $\Delta$-lattice $N \subset A$ such that $\Q N = A$, or, equivalently, since $\Delta$ is finitely generated over $\Z$, 
a complete $\Z$-lattice in $A$ such that $\Delta N \subset N$. By a left $\Delta$-lattice $N \subset A$ we mean a left $\Delta$-module $N$ which is a 
$\Z$-lattice in $A$. 
We say that two left $\Delta$-ideals $M$ and $N$ are isomorphic if $M \cong N$ as left  $\Delta$-modules. 

If $\Lambda$ is a maximal  $\Z$-order in $A$, the set $LF_1(\Lambda)$ of isomorphism classes of 
left $\Lambda$-ideals in $A$ is a finite set, and its cardinality is independent of the choice of $\Lambda$ (see \cite[Thm. 26.4]{Re}). Therefore, we may define $h_A: = \vert LF_1(\Lambda) \vert $ for any maximal $\Z$-order $\Lambda$ in $A$; it is called the class number of the central simple $\Q$-algebra $A$.

\subsection{Integral structures for $SL_M$ - arithmetic groups} If $\Lambda_D$ is a $\Z$-order in $D$, then $\Lambda: = M_2(\Lambda_D)$ is a 
$\Z$-order in $M = M_2(D)$. Moreover, if the order $\Lambda_D$ is maximal, then $\Lambda$ is maximal as well (see \cite[Thm. 21.6]{Re}). In this situation, within the realm of group schemes of finite type defined over a Dedekind domain, we can carry through the  construction of affine $\Z$-group schemes $GL_{\Lambda}$ and $SL_{\Lambda}$ of finite type, similar to the construction of the algebraic $\Q$-groups $GL_M$ and $SL_M$. In fact, if $\Lambda$ is a maximal order,  the associated 
  affine $\Z$-group scheme $SL_{\Lambda}$ of finite type  is smooth (cf. \cite[Sect. 8.3]{Schbook}). The algebraic $\Q$-group $SL_{\Lambda} \times_{\Z} \Q$ obtained by base change is the group $G: = SL_M$ defined above. Occasionally we write $G_{\Lambda}$ for the affine group scheme $SL_{\Lambda}$, and  $\Gamma_{\Lambda}: = SL_{\Lambda}(\Z)$ for the group of integral points of  $G_{\Lambda}$. Consequently, in this notation, $G_{\Lambda} \times_{\Z} \Q = G$.
  Any subgroup $\Gamma$ of finite index in $\Gamma_{\Lambda}$ is an arithmetic subgroup of $G(\Q)$.
  
 Given any proper ideal 
$\mathfrak{a} \subset \Z$  the corresponding  principal congruence subgroup of level $\mathfrak{a}$ is defined by
\begin{equation}
\Gamma_{\Lambda}(\mathfrak{a}): = \mathrm{ker}(SL_{\Lambda}(\Z) \longrightarrow SL_{\Lambda}(\Z/\mathfrak{a})).
\end{equation}
It gives rise to an arithmetic subgroup $\Gamma_{\Lambda}(\mathfrak{a})$ of $G(\Q)$.  Using \cite[Prop. 4.4.4]{Schbook}, one deduces that for almost all choices of the ideal $\mathfrak{a}$ the group $\Gamma_{\Lambda}(\mathfrak{a})$ is torsion-free.

Given a prime ideal $\frak{p} = (p)$ in $\Z$, $p$ a prime, and $j \geq 1$, let $\Gamma_{\Lambda}(\mathfrak{p^j})$ be the corresponding congruence subgroup of level $\frak{p}^j$. The collection 
$\{\Gamma_{\Lambda}(\mathfrak{p^j})\}$ forms a cofinal tower of normal subgroups of $\Gamma_{\Lambda} = SL_{\Lambda}(\Z)$, to be called $\frak{p}$-congruence tower. In other words, we have 
\begin{equation}
\bigcap_{j} \Gamma_{\Lambda}(\mathfrak{p^j}) = \{1 \}, \; \Gamma_{\Lambda}(\mathfrak{p^j}) \; \text{is of finite index in} \; SL_{\Lambda}(\Z), \; \text{and}
 \; \Gamma_{\Lambda}(\mathfrak{p^{j+1}}) \leq \Gamma_{\Lambda}(\mathfrak{p^j}) \; \text{for all}\; j.
  \end{equation}
We note that for any $j \geq 1$, the quotient group $\Gamma_{\Lambda}(\mathfrak{p^{j}})/\Gamma_{\Lambda}(\mathfrak{p^{j+1}})$ is an elementary abelian $p$-group. 

\subsection{The ambient Lie group}\label{ambi}
We denote by $G_{\infty}$ the  group of real points of the algebraic $\Q$-group $SL_M$ attached to the central simple  $\Q$-algebra $M_2(D)$ over a central simple $\Q$-algebra $D$ of degree two.
Let $X_{G}$ be the symmetric space associated with $G_{\infty}$, described as the space of maximal compact subgroups of $G_{\infty}$. In fact, all of these are conjugate to one another, thus, we may write $X_{G} = K_{\infty}\backslash G_{\infty}$ for any maximal subgroup $K_{\infty} \subset G_{\infty}$. Since $X_{G}$ is diffeomorphic to $\R^{d(G_{\infty})}$ where $d(G) = \dim G_{\infty} - \dim K_{\infty}$, the space $X_{G}$ is contractible.   The real Lie group $G_{\infty}$ acts properly from the right on $X_G$, therefore a given arithmetic subgroup $\Gamma$ of $G(\Q)$, being  viewed as a discrete, thus closed subgroup of $G_{\infty}$,
 acts properly on $X_G$ as well. If $\Gamma$ is torsion-free, the action of $\Gamma$ on $X_G$ is free, and the quotient $X_G/ \Gamma$ is a smooth manifold of dimension $d(G)$.

There is a $G_{\infty}$-invariant Riemannian metric on $X_{G}$.
Given an arithmetic  subgroup $\Gamma$ of $G(\Q)$, we are interested in the homogenous space $X_{G}/\Gamma$. If $\Gamma$ is torsion-free, the space $X_{G}/\Gamma$ carries the structure of a Riemannian manifold of finite volume.

Depending on the splitting behaviour of the central simple $\Q$-algebra $D$ at the archimedean place of $\Q$, we have to distinguish two cases: 

{$D$ \it splits at infinity:} If $D$ splits at the archimedean place, we have $D \otimes_{\Q} \R \cong M_2(\R)$, thus, $G_{\infty} \cong SL_4(\R)$. The corresponding symmetric space $X_G$ has dimension  
$d(G) = 9$.

{$D$ \it ramifies at infinity:} If $D$ ramifies at the only archimedean place, that is, $D$ is a definite central simple $\Q$-algebra
we have $D \otimes_{\Q} \R \cong \mathbb{H}$. Therefore, $M: = M_2(\mathbb{H})$ is the central simple algebra of $(2 \times 2)$-matrices with entries in the algebra of Hamilton quaternions.
The corresponding ambient Lie group, $SL_M(\R)$, 
the special linear group $SL_2$ over the non-commutative algebra $\mathbb{H}$ of Hamilton quaternions, is usually denoted by $SU^*(4)$. This group 
is the real form of $SL_{4}(\C)$ associated with the complex conjugation $\sigma: SL_{4}(\C) \longrightarrow SL_{4}(\C)$, defined by 
  $g \mapsto \eta_2^t \overline{g}  \eta_2$, where
  $\eta_2 =   \bigl ( \begin{smallmatrix} 0 & E_2 \\ - E_2 & 0 \end{smallmatrix} \bigr )$, with $E_2$ the identity matrix of size $2$, and where $\overline{g}$ stands for conjugating each entry of the matrix $g$.

  The assignment 
$(x_1, x_2, x_{3}, x_{4}) \mapsto (\overline{x}_{3}, \overline{x}_{4}, - \overline{x}_1, \ \overline{x}_2)$ defines a real linear map $\psi: \C^{4} \longrightarrow \C^{4}$.
Then we can realise the  real Lie group $SU^*(4)$   as
$\{g \in SL_{4}(\C)\; \vert \; g \psi = \psi g \}$. Its intersection with the maximal compact subgroup $U(4)$ of $GL_{4}(\C)$ is the group $Sp(2) = \{g \in SU^*(4)\;\vert \; g \overline{g}^t = \overline{g}^t g = \mathrm{1} \}$.
(see \cite[X, Lemma 2.1]{Hel}). The symmetric space $X_2: = Sp(2)\backslash SU^*(4)$, attached to the Riemannian symmetric pair $(SU^*(4), Sp(2))$ of non-compact type is of type  A II. It is a simply connected space of dimension $d(G)  = 5$ and of rank one.
Since the symmetric space $X_2$ of type AII  coincides with the symmetric space of type BD I attached to the pair $(SO(5, 1)_0, SO(5) \times SO(1))$, $X_2$ can be identified with hyperbolic $5$-space, to be denoted $\mathrm{H}^5$.

\subsection{Strong approximation property}\label{strongapp}  Since  the $\Q$-group $G = G_{\Lambda} \times_{\Z} \Q$, associated with a maximal order $\Lambda$ in $M_2(D)$, is $\Q$-simple simply connected and $G_{\infty}$ is not compact, the group $G$ has the strong approximation property (see \cite{Kneser}). Therefore, $G(\Q)$ is dense in the locally compact group $G(\A_{\Q, f})$, or, equivalently, $G_{\infty} G(\Q)$ is dense in $G(\A_{\Q})$.

Given a  place $v \in V_{\Q}$, we denote by $k_v = \Q_v$ the completion of $\Q$ with respect to the normalised absolute value which corresponds to $v$. For a finite place $v \in V_{k, f}$, the field $k_v$ is a local field, and we write $\mathcal{O}_v$ for the valuation ring in $k_v$. It contains the valuation ideal $\frak{p}_v$ as the unique maximal ideal.

For each $v \in V_{\Q, f}$, we write $K_v: = G_{\Lambda}(\mathcal{O}_v)$. Then the group $\Gamma_{\Lambda} = G_{\Lambda}(\Z)$ of integral points can be written as 
$\Gamma_{\Lambda} =  G(\Q) \cap \prod_{v \in V_{\Q, f}} K_v$. For the sake of simplicity, we put $K_f: = \prod_{v \in V_{\Q, f}} K_v$.
For any $v \in V_{\Q, f}$, given any power $\frak{p}^e_v$ of the valuation ideal, we denote by
$$K_v(\frak{p}^e_v): = \mathrm{ker}(G_{\Lambda}(\mathcal{O}_v) \longrightarrow G_{\Lambda}(\mathcal{O}_v/\frak{p}^e_v\mathcal{O}_v))$$
the kernel of the natural homomorphism obtained by restriction. It is an open compact subgroup of $G_{\Lambda}(\mathcal{O}_v)$.
Any  ideal $\frak{a}$ has a  factorisation, unique up to the order of factors,
$\frak{a} = \prod \frak{p}_v^{e_v(\frak{a})},$
where the product extends over the finite set of all places $v \in V_{\Q, f}$ for which $\frak{p}_v$ occurs with the exponent $e_v(\frak{a})$ in this factorisation of $\frak{a}$.
For a fixed place $v \in V_{\Q, f}$, we have  $\frak{a} \mathcal{O}_v = \frak{p}_{v}^{e_v(\frak{a})} \mathcal{O}_v$. 
The direct product 
\begin{equation}
K_f(\frak{a}): = \prod_{\substack{v \in V_{\Q, f}\\ \frak{p}_v \vert \frak{a}}} K_v(\frak{p}_v^{e_{v}(\frak{a})}) \times 
\prod_{\substack{v \in V_{\Q, f}\\ \frak{p}_v \nmid \frak{a}}} K_v
\end{equation}
 is an open compact subgroup of $G(\A_{\Q, f})$.  We see
$\Gamma_{\Lambda}(\frak{a}) = G(\Q) \cap  K_f(\frak{a}).$
Using the strong approximation property of the algebraic $\Q$-group $G$, the continuous map $G_{\infty} \longrightarrow K_f(\frak{a}) \backslash G(\A_{\Q})/G(\Q)$, defined by 
the assignment 
$g \mapsto K_f(\frak{a})g G(\Q)$, induces 
a homeomorphism $$K_f(\frak{a})\backslash G(\A_{\Q})/G(\Q) \tilde{\longrightarrow} G_{\infty}/\Gamma_{\Lambda}(\frak{a})$$ which is equivariant under the action of $G_{\infty}$.

\section{Reduction theory - a suitable compactification}
We retain our previous notation and consider
a central simple  $\Q$-algebra $D$ of degree two and the algebraic $\Q$-group $G = SL_{\Lambda} \times_{\Z} \Q$ attached to a maximal order $\Lambda$ in the central simple $\Q$-algebra $M= M_2(D)$. 

\subsection{A result in reduction theory}
The $\Q$-rank of $G$ is one, thus, all proper parabolic  $\Q$-subgroups of $G$ are minimal, and all of these are conjugate under $G(\Q)$. Given an arithmetic subgroup $\Gamma$ of $G(\Q)$, the only $G(\Q)$-conjugacy class falls into finitely many $\Gamma$-conjugacy classes (see \cite[Prop. 15.6]{Bo1}).

As an application of the main results in reduction theory (see \cite[Thms. 1.2.2 and 1.2.3]{Ha4}), we have, in the specific case of an algebraic $\Q$-group of $\Q$-rank one, the following result:

\begin{theorem}\label{reduction} Let $D$ be  a central simple  $\Q$-algebra of degree two, and let  $G = SL_{\Lambda} \times_{\Z} \Q$ be the  algebraic $\Q$-group attached to a maximal order $\Lambda$ in the central simple $\Q$-algebra $M= M_2(D)$. 
Given a torsion-free arithmetic subgroup $\Gamma \subset SL_{\Lambda}(\Z) \subset G(\Q)$, the  locally symmetric space $X_G/\Gamma$ contains an open subset $Y_{\Gamma} \subset X_G/\Gamma$ such that its closure $\overline{Y}_{\Gamma}$ is a compact manifold with boundary $\partial \overline{Y}_{\Gamma}$, and the inclusion $\overline{Y}_{\Gamma} \longrightarrow X_G/\Gamma$ is a homotopy equivalence. 
The connected components of  $\partial \overline{Y}_{\Gamma}$ 
are parametrized by  the finite set $\mathcal{P}/\Gamma$ of 
$\Gamma$-conjugacy classes of minimal parabolic $\Q$-subgroups of $G$.
We have as a disjoint union 
$\partial \overline{Y}_{\Gamma} = \coprod_{[P] \in \mathcal{P}/\Gamma}Y^{[P]}.$
\end{theorem}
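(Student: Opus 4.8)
The plan is to deduce the statement from the general results of reduction theory for arithmetic subgroups of reductive $\Q$-groups, applied to the present case in which $G$ has $\Q$-rank one, reading off the manifold structure from the hypothesis that $\Gamma$ is torsion-free.

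First I would record the combinatorial input on the parabolic side. Since the $\Q$-rank of $G$ equals one, every proper parabolic $\Q$-subgroup is minimal and hence $G(\Q)$-conjugate to the standard minimal parabolic $P_0 = M_0 T N_0$ of Section~\ref{schemes}; by \cite[Prop.~15.6]{Bo1} this single $G(\Q)$-orbit splits into finitely many $\Gamma$-conjugacy classes, of which I fix representatives $P_1,\dots,P_r$, writing $P_i = L_i N_i$ for the Levi decomposition and $A_i \subset L_i$ for the one-dimensional maximal $\Q$-split central torus (a $G(\Q)$-conjugate of $T$). For each $i$ the symmetric space carries a horospherical decomposition $X_G \cong W_i \times A_i(\R)^{\circ}$, in which $A_i(\R)^{\circ} \cong \R_{>0}$ operates through the geodesic action of $A_i$ and $W_i$ is a horosphere preserved by the subgroup of $P_i(\R)$ on which all $\Q$-rational characters are trivial; the character $\alpha$, transported to $A_i$, supplies the coordinate $a \mapsto a^{\alpha}$ pointing toward the cusp attached to $P_i$.

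Next I would invoke reduction theory in the form of \cite[Thms.~1.2.2 and 1.2.3]{Ha4} (equivalently, the Borel--Serre bordification specialised to $\Q$-rank one). These provide, for each $i$, a Siegel set adapted to $P_i$ such that finitely many of them cover $X_G$ modulo $\Gamma$, together with a threshold $t_0$ with the following separation property for all $t \ge t_0$: the image in $X_G/\Gamma$ of the deep part $\mathcal{S}_{i,t} := \Omega_i \times \{\, a \in A_i(\R)^{\circ} : a^{\alpha} > t \,\}$ of the $i$-th Siegel set ($\Omega_i \subset W_i$ relatively compact) is an open set $U_i^{(t)}$; distinct $U_i^{(t)}$ are disjoint; the complement $\overline{Y}_\Gamma := (X_G/\Gamma) \setminus \coprod_i U_i^{(t)}$ is compact; and $\gamma\,\mathcal{S}_{i,t} \cap \mathcal{S}_{j,t} \neq \varnothing$ forces $i = j$ and $\gamma \in \Gamma \cap P_i$. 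Because $\Gamma$ is torsion-free it acts freely and properly discontinuously on $X_G$, so each $U_i^{(t)}$ is diffeomorphic to $Y^{[P_i]} \times (t,\infty)$, where $Y^{[P_i]} := (\Gamma \cap P_i)\backslash W_i$ is a compact smooth manifold (compactness following from minimality of $P_i$ over $\Q$, its finer structure being the subject of Proposition~\ref{boundary}). Taking $Y_\Gamma$ to be the interior of $\overline{Y}_\Gamma$ yields the asserted open subset of $X_G/\Gamma$; one has $\partial \overline{Y}_\Gamma = \coprod_i Y^{[P_i]}$, and the last clause of the separation property identifies this index set with $\mathcal{P}/\Gamma$. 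Finally, on each cusp neighbourhood $U_i^{(t)} \cong Y^{[P_i]} \times (t,\infty)$ the flow pushing $a^{\alpha} \to \infty$ is a strong deformation retraction onto $Y^{[P_i]} \times \{t\} \subset \overline{Y}_\Gamma$, and glueing these with the identity on $\overline{Y}_\Gamma$ retracts $X_G/\Gamma$ onto $\overline{Y}_\Gamma$; hence the inclusion $\overline{Y}_\Gamma \hookrightarrow X_G/\Gamma$ is a homotopy equivalence.

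I expect the substantial point to be the separation statement underpinning the threshold $t_0$ --- that for $t$ sufficiently large the cusp neighbourhoods $U_i^{(t)}$ are pairwise disjoint and ``clean'', i.e.\ $\gamma\,\mathcal{S}_{i,t}$ can meet $\mathcal{S}_{j,t}$ only when $j = i$ and $\gamma \in \Gamma \cap P_i$. This is precisely the content of the cited reduction-theoretic theorems; in the $\Q$-rank one situation at hand it comes down to a one-variable estimate on the geodesic coordinate $a^{\alpha}$ along Siegel sets, but it is the geometric heart of the construction. The remaining ingredients --- the product structure of the cusp neighbourhoods, the fact that $\partial \overline{Y}_\Gamma$ is a genuine manifold rather than an orbifold (which is exactly where torsion-freeness of $\Gamma$ is used), and the geodesic retraction --- are then formal.
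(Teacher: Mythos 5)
Your argument is correct and follows the same route as the paper, which itself gives no proof beyond invoking \cite[Thms.~1.2.2 and 1.2.3]{Ha4} for the $\Q$-rank one case; you have simply made explicit the standard Siegel-set separation, the product structure of the cusp neighbourhoods, and the geodesic retraction that those theorems package. The details you supply (including the role of torsion-freeness in upgrading the quotients from orbifolds to manifolds) are consistent with what the paper takes for granted.
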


\subsection{The boundary components} In a more general context, a description of  a boundary component $Y^{[P]}$ as a fibre bundle is given in \cite[Theorem 5.4]{KSch}. Though we need some technical paraphernalia,
in our specific case the result has a simple form.
Given the standard minimal parabolic $\Q$-subgroup $P_0 = L_0 N_0$ of $G$,
any $\Q$-character $\chi: L_0 \longrightarrow \G_m$ induces a homomorphism
$
\chi_{\infty}: L_{0, \infty} \longrightarrow \G_{m, \infty} \cong (\R^{\times})$.
The absolute value  $\vert \cdot \vert$  on $\R$ defines the norm homomorphism 
$\vert \cdot \vert:  \G_{m, \infty} \cong   (\R^{\times}) \longrightarrow \R^{\times}_{> 0}, \quad g \mapsto 
 \vert g \vert.
$
We canonically extend the compositum $\vert \cdot \vert \circ \chi_{\infty}: L_{0, \infty} \longrightarrow \R^{\times}_{> 0}$   to a homomorphism
$\vert \chi \vert: P_{0, \infty} \longrightarrow \R^{\times}_{> 0}.
$
We apply this construction to the character $\rho: L_0 \longrightarrow \G_m$, given by the assignment
$$L_0(\Q) = \left \{ g =  \bigl ( \begin{smallmatrix}
x & 0 \\
0 & x^{-1} \end{smallmatrix} \bigr ) \; \vert \; x  \in
D^{\times} \right \} \longrightarrow \G_m(\Q), \quad \bigl ( \begin{smallmatrix}
x & 0 \\
0 & x^{-1} \end{smallmatrix} \bigr ) \mapsto \mathrm{nrd}_{D/\Q}(x).$$
We define 
\begin{equation}\label{Lnull}
L^{(1)}_{0, \infty} = \left \{ g =  \bigl ( \begin{smallmatrix}
x & 0 \\
0 & x^{-1} \end{smallmatrix} \bigr ) \; \vert \; x  \in
D^{\times}_{\infty},  \;  \vert \mathrm{nrd}_{D/\Q}(x) \vert  = 1 \right \} \;\text{resp.}\;
P^{(1)}_{0, \infty}: = \{p \in P_{0, \infty}\; \vert \;  \vert \rho \vert (p) = 1 \}.
\end{equation}
Given any point $x \in X_G$,  let $K_x \subset G_{\infty}$ be the corresponding maximal compact subgroup of the Lie group $G_{\infty}$, then $P^{(1)}_{0, \infty} \cap K_x = P_{0, \infty} \cap K_x$.  Moreover, since the image of the arithmetic group $\Gamma$ under $\rho$ is an arithmetic subgroup of $\G_m(\Q)$, thus,  contained in $\{\pm 1\}$, we have $ \vert \rho \vert (\gamma) = 1$ for every $\gamma \in  P_{0, \infty} \cap \Gamma$. 
It follows that $P_{0, \infty} \cap \Gamma = P^{(1)}_{0, \infty} \cap \Gamma$.  Given any other minimal parabolic subgroup $P$ of $G$, there is a $g \in G(\Q)$ such that 
$gP(\Q)g^{-1} = P_0(\Q)$. Therefore, we can define $P^{(1)}_{\infty}$ via conjugation.

 Given the Levi decomposition $P = LN$ of the minimal parabolic $\Q$=subgroup $P$ as the semi-direct product
of its unipotent radical $N$ and the Levi subgroup $L$,
there is 
 a surjective morphism $p: P^{(1)}_{\infty} \longrightarrow L^{(1)}_{\infty}$. The preimage of  a point in $L^{(1)}_{\infty}$ is diffeomorphic to the group $N_{\infty}$ of real points of the commutative group $N$.
 
 The image $K_L: = p(K \cap P^{(1)}_{\infty})$ of $K \cap P^{(1)}_{\infty}$ under this projection is a maximal compact subgroup in $L^{(1)}_{\infty}$. We write $Z_L: = K_L \backslash L^{(1)}_{\infty}$ for the associated manifold of right cosets.
The image $\Gamma_L$ of $P^{(1)}_{\infty} \cap \Gamma$ under $p$ is a discrete torsion-free subgroup of $L^{(1)}_{\infty}$. The group $\Gamma_L$ acts properly and freely on $Z_L$, and the double coset space $Z_L/ \Gamma_L$ is a manifold with universal cover $Z_L$. The projection $p: P^{(1)}_{\infty} \longrightarrow L^{(1)}_{\infty}$ induces 
 a locally trivial fibration 
$
\pi: (K \cap \Gamma) \backslash P^{(1)}_{\infty} / (P^{(1)}_{\infty} \cap \Gamma) \longrightarrow Z_L/ \Gamma_L;
$
with  the compact manifold $N_{\infty}/(N_{\infty} \cap \Gamma)$ as fibre. In fact, $Y^{[P]}$, 
admits the structure of a fibre bundle  which 
is equivalent to the fibre bundle 
 \begin{equation}\label{fibreb}
 (Z_L \times_{\Gamma_L} N_{\infty}/(N_{\infty} \cap \Gamma), Z_L, N_{\infty}/(N_{\infty} \cap \Gamma)).
  \end{equation}
 This bundle  is associated by the natural action of $\Gamma_L$ on the compact fibre $N_{\infty}/(N_{\infty} \cap \Gamma)$, induced by inner automorphisms, to the universal covering  $Z_L \longrightarrow Z_L/ \Gamma_L$ (cf. \cite[Prop. 4.3]{KSch}).

\begin{proposition}\label{boundary}
Let $D$ be  a   central simple  $\Q$-algebra of degree two,  let  $G = SL_{\Lambda} \times_{\Z} \Q$ be the  algebraic $\Q$-group attached to a maximal order $\Lambda$ in the central simple $\Q$-algebra $M= M_2(D)$, and let $\Gamma \subset SL_{\Lambda}(\Z) \subset G(\Q)$ be a torsion-free arithmetic subgroup.
\begin{itemize}
\item[-]{If $D$ is a definite $\Q$-algebra, the  locally symmetric space $X_G/\Gamma \cong \mathrm{H}^{5}/\Gamma$ is homotopy equivalent to the compact $5$-dimensional   manifold
$\overline{Y}_{\Gamma}$ whose boundary $\partial \overline{Y}_{\Gamma}$ is a disjoint union of $4$-dimensional tori.}
\item[-]{If $D$ is an indefinite $\Q$-algebra, the  locally symmetric space $X_G/\Gamma$ is homotopy equivalent to the compact $9$-dimensional  manifold
$\overline{Y}_{\Gamma}$. A connected component $Y^{[P]}$ of its boundary $\partial \overline{Y}_{\Gamma}$ is diffeomorphic to the total space of a fibre bundle whose base space 
$Z_L/\Gamma_L $ is diffeomorphic to an arithmetically defined compact quotient of the product $\mathrm{H}^2 \times \mathrm{H}^2$ of two upper half planes 
and whose  compact fibres are equal to  $4$-dimensional tori.}
\end{itemize}
\end{proposition}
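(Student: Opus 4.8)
The plan is to feed the explicit groups of Section~\ref{schemes} into the reduction-theoretic picture already assembled above, so that the proof reduces to identifying the base and the fibre of the bundle \eqref{fibreb} in the two cases. By Theorem~\ref{reduction} the closure $\overline{Y}_\Gamma$ is a compact manifold with boundary, the inclusion $\overline{Y}_\Gamma\hookrightarrow X_G/\Gamma$ is a homotopy equivalence, $\dim\overline{Y}_\Gamma=d(G)$ (which equals $5$ resp.\ $9$ by Section~\ref{ambi}, with $X_G=\mathrm{H}^5$ and hence $X_G/\Gamma\cong\mathrm{H}^5/\Gamma$ in the definite case), and $\partial\overline{Y}_\Gamma=\coprod_{[P]\in\mathcal{P}/\Gamma}Y^{[P]}$ with each $Y^{[P]}$ the total space of the bundle $(Z_L\times_{\Gamma_L}N_\infty/(N_\infty\cap\Gamma),\,Z_L/\Gamma_L,\,N_\infty/(N_\infty\cap\Gamma))$. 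Since all minimal parabolic $\Q$-subgroups are $G(\Q)$-conjugate it suffices to treat $P=P_0$, so the two ingredients to compute are the fibre $N_\infty/(N_\infty\cap\Gamma)$ and the base $Z_L/\Gamma_L$.

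I would first dispose of the fibre, which is the same in both cases. The unipotent radical $N_0$ of $P_0$ is abelian and $\Q$-isomorphic to the additive group of $D$, so $N_\infty\cong\R^4$; and $N_\infty\cap\Gamma$ is a full-rank lattice in $N_\infty$ by reduction theory, or a posteriori because the fibre in Theorem~\ref{reduction} is compact. Thus $N_\infty/(N_\infty\cap\Gamma)$ is a $4$-dimensional real torus throughout.

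The base is where the dichotomy originates, through the archimedean completion of $D$, and I would read it off from \eqref{Lnull}. If $D$ is definite, then $D\otimes_\Q\R\cong\mathbb{H}$ and the reduced norm of $\mathbb{H}$ is positive definite, so the reduced-norm-one condition defining $L^{(1)}_{0,\infty}$ cuts it down to a compact group assembled from copies of $\mathbb{H}^1\cong Sp(1)$. Hence $L^{(1)}_{0,\infty}$ is compact, $K_L=L^{(1)}_{0,\infty}$, the space $Z_L=K_L\backslash L^{(1)}_{0,\infty}$ is a point, and the discrete torsion-free group $\Gamma_L$ is trivial; the bundle \eqref{fibreb} then collapses to $Y^{[P]}=N_\infty/(N_\infty\cap\Gamma)$, so $\partial\overline{Y}_\Gamma$ is a disjoint union of $4$-dimensional tori. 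If $D$ is indefinite, then $D\otimes_\Q\R\cong M_2(\R)$ and the reduced norm is the determinant, so the condition $|\rho|=1$ no longer forces compactness: $L^{(1)}_{0,\infty}$ is a product of two copies of $\{g\in GL_2(\R):|\det g|=1\}$, with maximal compact subgroup $O(2)\times O(2)$, whence $Z_L=K_L\backslash L^{(1)}_{0,\infty}\cong\mathrm{H}^2\times\mathrm{H}^2$. The group $\Gamma_L$ is the image in $L^{(1)}_{0,\infty}$ of the block upper triangular elements of $\Gamma$; since integrality forces their diagonal $D$-entries to lie in $\Lambda_D^\times$, up to finite index $\Gamma_L$ is the arithmetic group $\{(x,y)\in(\Lambda_D^\times)^2:\mathrm{nrd}_{D/\Q}(x)\,\mathrm{nrd}_{D/\Q}(y)=1\}$ acting on $\mathrm{H}^2\times\mathrm{H}^2$ through $D_\infty\cong M_2(\R)$. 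This action is cocompact, either directly from the compactness of $\overline{Y}_\Gamma$ in Theorem~\ref{reduction}, or because the reduced-norm-one units of $\Lambda_D$ are cocompact in $SL_2(\R)$ ($D$ being a division algebra). So $Z_L/\Gamma_L$ is an arithmetically defined compact quotient of $\mathrm{H}^2\times\mathrm{H}^2$ and $Y^{[P]}$ fibres over it with $4$-dimensional torus fibres, which is the assertion.

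The step carrying genuine content is the identification of $L^{(1)}_{0,\infty}$ and of its maximal compact subgroup $K_L$ --- that is, reading the real form of the Levi subgroup of the minimal $\Q$-parabolic off the splitting type of $D$ at the archimedean place, and in particular the transition from a compact $L^{(1)}_{0,\infty}$ (definite case) to one isogenous to $SL_2(\R)\times SL_2(\R)$ (indefinite case); everything else is bookkeeping on top of Theorem~\ref{reduction} and the bundle \eqref{fibreb}. A minor care-point is to use the torsion-freeness of $\Gamma$ where it is needed: it is what makes $\Gamma_L$ trivial in the definite case and makes all the quotients genuine manifolds rather than orbifolds.
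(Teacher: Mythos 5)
Your argument follows the paper's proof essentially step for step: reduce to $P_0$, identify the fibre of the bundle \eqref{fibreb} as a $4$-torus via $N_{0,\infty}\cong\R^4$, and read the base off the archimedean completion of $D$ (compact $L^{(1)}_{0,\infty}$, hence a point, in the definite case; $\mathrm{H}^2\times\mathrm{H}^2$ with a cocompact arithmetic quotient in the indefinite case, the cocompactness being exactly the theorem of K\"ate Hey that the paper cites). The only difference is cosmetic: your description of $L^{(1)}_{0,\infty}$ as a two-factor group with a product-of-norms condition is a slightly more explicit rendering of the Levi than the paper's, and is consistent with the componentwise $\Gamma_{L,D}\times\Gamma_{L,D}$-action the paper uses.
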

\begin{proof}
We may assume that $P$ is the standard minimal parabolic $\Q$-subgroup $P_0 = L_0 N_0$ whose group of $\Q$-points is
$
P_0(\Q) = \left \{ g =  \bigl ( \begin{smallmatrix}
x & y \\
0 & x^{-1} \end{smallmatrix} \bigr ) \; \vert \; x \in D^{\times}, y \in D
\right \}.
$ The group of $\Q$-points of its unipotent radical is commutative, and we obtain as additive groups
$N_{0, \infty} \cong (D_a)(\R) \cong \mathbb{H}$ in the first case, and $N_{0, \infty} \cong M_2(\R)$ in the second case. Therefore, it follows that $N_{0, \infty} \cong  \R^4$.
 The group $N_{0, \infty} \cap \Gamma$ as  a discrete subgroup of $N_{0, \infty}$ forms a complete lattice in $\R^{4}$, thus, the fibre of the bundle (\ref{fibreb}) is a $4$-dimensional torus.  
 
 In the case of a definite central simple $\Q$-algebra of degree two, we have, by the very definition, $D_{\infty} \cong \mathbb{H}$. Since 
 $ \mathbb{H}^{(1)} =\{h \in  \mathbb{H}\;\vert\; \vert \mathrm{nrd}_{\mathrm{H}/\Q}(h)\vert  = 1 \}$ is a compact group, it follows  that the base $Z_L$  of the fibre bundle is a point, and hence $Y^{[P]} \cong N_{0, \infty}/ (N_{0, \infty} \cap \Gamma)$.
 
 In the case of an indefinite central simple  $\Q$-algebra $D$, we have $D_{\infty} \cong M_2(\R)$.   In obvious notation, we see $D^{(1)}_{\infty} \cong \{m \in M_2(\R)^{\times}\; \vert \vert \mathrm{nrd}_{M_2(\R)/\R}(m) \vert = 1 \} \cong SL_2(\R)^{\pm}$.
 The corresponding symmetric space is the hyperbolic $2$-space $\mathrm{H}^2$. Therefore, we see $Z_L \cong  \mathrm{H}^2 \times \mathrm{H}^2$. Recall that the maximal order 
 $\Lambda = M_2(\Lambda_D)$ originates from a maximal order $\Lambda_D$ in $D$.  The group $\Gamma_L$ is a subgroup of finite index in the group
  \begin{equation*}
L^{(1)}_{0, \Lambda}: = \left \{ \ell =  \bigl ( \begin{smallmatrix}
\lambda& 0 \\
0 & \lambda^{-1} \end{smallmatrix} \bigr ) \; \vert \; \lambda  \in
\Lambda_D^{\times},  \;  \vert \mathrm{nrd}_{D/\Q}(\ell) \vert  = 1 \right \}. 
\end{equation*}
The group $L^{(1)}_{0, \Lambda}$ is an arithmetically defined subgroup of $L^{(1)}_{0, \infty}$ and acts component wise on $Z_L \cong  \mathrm{H}^2 \times \mathrm{H}^2$.
It follows that $Z_L/\Gamma_L$ is diffeomorphic to the quotient of $\mathrm{H}^2 \times \mathrm{H}^2$ obtained by the action of the group $\Gamma_{L, D} \times \Gamma_{L, D}$
 where $\Gamma_{L, D}$ is a subgroup of finite index in the group $\Lambda_{D}^{\times}$ of units in $\Lambda_D$. Using the theorem of K\"ate Hey (see \cite[Thm. 3.4.1]{Schbook}), the quotient $D^{(1)}_{\infty}/\Gamma_{L, D}$ is compact. 
 \end{proof}

\section{On the cohomology  $H^*(X_G/\Gamma, \C)$}\label{gencoh}
\subsection{The long exact sequence in cohomology attached to the pair $\overline{Y}_{\Gamma}, \partial \overline{Y}_{\Gamma}$}  Let $D$ be a central simple  $\Q$-algebra algebra of degree two. Given a maximal order $\Lambda_D$ in $D$, the corresponding order $\Lambda = M_2(\Lambda_D)$ defines the $\Z$-group scheme $SL_{\Lambda}$. Via base change, we obtain the algebraic $\Q$-group  $G = SL_{\Lambda} \times_{\Z} \Q = SL_{M_2(D)}$. Then 
$\Gamma_{\Lambda}: = SL_{\Lambda}(\Z)$ is an arithmetic subgroup of $G(\Q)$. Any subgroup of $\Gamma_{\Lambda}$ acts properly on the  symmetric space $X_G$ of dimension $d(G)$ attached to the group of real points of $G$.

 Given a torsion-free arithmetic subgroup $\Gamma \subset G(\Q)$, the quotient $X_G/\Gamma$ carries the structure of a complete Riemannian manifold of finite volume but non-compact. By Theorem \ref{reduction}, there exists an open subset $Y_{\Gamma} \subset X_G/\Gamma$ such that its closure $\overline{Y}_{\Gamma}$ is a compact manifold with boundary $\partial \overline{Y}_{\Gamma} $, and the inclusion $\overline{Y}_{\Gamma} \longrightarrow X_G/\Gamma$ is a homotopy equivalence. Therefore, in terms of singular or deRham cohomology, we have $H^q(\overline{Y}_{\Gamma}, \C) \cong H^q(X_G/\Gamma, \C)$.  Note that $H^q(X_G/\Gamma, \C) = \{0\}$ for any $q \geq d(G)$.
We have a long exact sequence in cohomology 
$$
\longrightarrow H^q(\overline{Y}_{\Gamma}, \partial \overline{Y}_{\Gamma}, \C) \longrightarrow 
H^q(\overline{Y}_{\Gamma}, \C)  \overset{r^q}{\longrightarrow} H^q(\partial \overline{Y}_{\Gamma}, \C) \longrightarrow  H^{q+1}(\overline{Y}_{\Gamma}, \partial \overline{Y}_{\Gamma}, \C) \longrightarrow
$$
where $r{^q}: H^q(\overline{Y}_{\Gamma}, \C)  \overset{r^q}{\longrightarrow} H^q(\partial \overline{Y}_{\Gamma}, \C)$ denotes the natural homomorphism obtained by restriction.
  
 \begin{proposition}\label{image}  Let 
 $\sR^q: = \mathrm{im}(H^q(\overline{Y}_{\Gamma}, \C)  \overset{r^q}{\longrightarrow} H^q(\partial \overline{Y}_{\Gamma}, \C))$ denote the image of the restriction homomorphism $r^q$. Then 
 $\dim \sR^q + \dim \sR^{d(G) - 1 - q} = \dim H^q(\partial \overline{Y}_{\Gamma}, \C)$. Therefore, it follows that the  image of $r^{*}$
  has  dimension equal to one half of the dimension of $H^*(\partial \overline{Y}_{\Gamma}, \C)$. 
\end{proposition}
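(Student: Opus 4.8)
The plan is to exploit Poincaré–Lefschetz duality for the compact manifold-with-boundary $\overline{Y}_{\Gamma}$ together with the long exact sequence of the pair $(\overline{Y}_{\Gamma}, \partial\overline{Y}_{\Gamma})$. Set $N := d(G)$, so $\overline{Y}_{\Gamma}$ has dimension $N$ and $\partial\overline{Y}_{\Gamma}$ has dimension $N-1$. Since $\Gamma$ is torsion-free the quotient $X_G/\Gamma$ is an orientable Riemannian manifold (its frame bundle reduces because $X_G$ is simply connected and $G_\infty$ is connected), hence $\overline{Y}_{\Gamma}$ is a compact orientable manifold with boundary and Lefschetz duality applies with $\C$-coefficients. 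Concretely, there are natural isomorphisms $H^q(\overline{Y}_{\Gamma},\C)\cong H_{N-q}(\overline{Y}_{\Gamma},\partial\overline{Y}_{\Gamma},\C)$ and $H^q(\overline{Y}_{\Gamma},\partial\overline{Y}_{\Gamma},\C)\cong H_{N-q}(\overline{Y}_{\Gamma},\C)$, and on the closed $(N-1)$-manifold $\partial\overline{Y}_{\Gamma}$ ordinary Poincaré duality gives $H^q(\partial\overline{Y}_{\Gamma},\C)\cong H_{N-1-q}(\partial\overline{Y}_{\Gamma},\C)$.

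The key step is to compare the restriction map $r^q\colon H^q(\overline{Y}_{\Gamma},\C)\to H^q(\partial\overline{Y}_{\Gamma},\C)$ with the connecting homomorphism $\partial_*\colon H_{N-q}(\overline{Y}_{\Gamma},\partial\overline{Y}_{\Gamma},\C)\to H_{N-q-1}(\partial\overline{Y}_{\Gamma},\C)$ of the homology long exact sequence of the pair. Under the duality isomorphisms above, $r^q$ is carried to $\partial_*$ up to sign (this is the standard naturality of the cap product with the fundamental class, which intertwines the cohomology LES of the pair with the homology LES), so $\dim\,\mathrm{im}(r^q)=\dim\,\mathrm{im}(\partial_*\colon H_{N-q}(\overline{Y}_{\Gamma},\partial\overline{Y}_{\Gamma})\to H_{N-q-1}(\partial\overline{Y}_{\Gamma}))$. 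On the other hand, dualizing again, $\mathrm{im}(\partial_*)$ in degree $N-q-1$ is, via $H_{N-q-1}(\partial\overline{Y}_{\Gamma})\cong H^q(\partial\overline{Y}_{\Gamma})$, the annihilator under the Poincaré pairing on $\partial\overline{Y}_{\Gamma}$ of the image of $r^{\,N-1-q}\colon H^{N-1-q}(\overline{Y}_{\Gamma},\C)\to H^{N-1-q}(\partial\overline{Y}_{\Gamma},\C)$; here one uses that $\ker(r^{q})$ equals the image of $H^q(\overline{Y}_{\Gamma},\partial\overline{Y}_{\Gamma})$ in the cohomology LES, that $\mathrm{im}(r^q)=\ker(\delta^q)$ where $\delta^q$ is the connecting map, and that the two sequences are exact. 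Combining, $\mathrm{im}(r^q)$ and $\mathrm{im}(r^{\,N-1-q})$ are mutual annihilators in the non-degenerate pairing $H^q(\partial\overline{Y}_{\Gamma},\C)\times H^{N-1-q}(\partial\overline{Y}_{\Gamma},\C)\to\C$, which forces $\dim\sR^q+\dim\sR^{N-1-q}=\dim H^q(\partial\overline{Y}_{\Gamma},\C)$.

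Finally, summing this identity over all $q$ from $0$ to $N-1$ and noting that the left-hand side, reindexed, counts $\sum_q\dim\sR^q$ twice while the right-hand side sums to $\dim H^*(\partial\overline{Y}_{\Gamma},\C)$, yields $\dim\,\mathrm{im}(r^*)=\tfrac12\dim H^*(\partial\overline{Y}_{\Gamma},\C)$. The main obstacle is the careful bookkeeping in the middle step: one must verify that the duality isomorphisms genuinely intertwine the cohomology long exact sequence of $(\overline{Y}_{\Gamma},\partial\overline{Y}_{\Gamma})$ with its homology counterpart (a sign-checked diagram chase, e.g. as in Spanier or Hatcher), and then translate "$\mathrm{im}(r^q)=\ker(\delta^q)$ is the annihilator of $\mathrm{im}(r^{N-1-q})$" correctly — the orientability of $\overline{Y}_{\Gamma}$, which we have noted holds since $\Gamma$ is torsion-free, is exactly what makes these pairings non-degenerate.
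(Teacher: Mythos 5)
Your argument is correct and is essentially the paper's own proof: both rest on Poincar\'e--Lefschetz duality for the oriented compact manifold with boundary $\overline{Y}_{\Gamma}$, the commutative ladder intertwining the cohomology long exact sequence of the pair with its homology counterpart via cap product with the fundamental classes, and the observation that $\sR^q$ and $\sR^{d(G)-1-q}$ annihilate each other under the nondegenerate cup-product pairing on $H^*(\partial\overline{Y}_{\Gamma},\C)$. The only cosmetic difference is that you state the conclusion degreewise as ``mutual annihilators'' while the paper packages it as ``$R^*$ is a maximal isotropic subspace''; your version makes the dimension count $\dim\sR^q+\dim\sR^{d(G)-1-q}=\dim H^q(\partial\overline{Y}_{\Gamma},\C)$ slightly more explicit, but the substance is identical.
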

\begin{proof}
As pointed out in \ref{ambi}, 
depending on the splitting behaviour of the central simple $\Q$-algebra $D$ at the archimedean place of $\Q$, we have to distinguish two cases: 
Firstly,   $D$ splits at the archimedean place, that is,  $D \otimes_{\Q} \R \cong M_2(\R)$, and the corresponding symmetric space $X_G$ has dimension  
$d(G) = 9$. Secondly, 
$D$ ramifies at the archimedean place 
that is, $D \otimes_{\Q} \R \cong \mathbb{H}$, and the corresponding symmetric space $X_G$ is the $5$-dimensional hyperbolic space, thus, $d(g) = 5$. 
In both cases we are within the range of oriented compact manifolds with boundary of odd dimension $2k+1$.

The actual proof relies on a general result in the cohomology theory of oriented compact manifolds with boundary, with a focus on duality. Due to the lack of a suitable reference we include it in the following Proposition.
\end{proof}

 \begin{proposition} Let $M$ be an oriented compact manifold of odd dimension $2k+1$ with boundary $\partial M$, and let $F$ be any field of characteristic zero.  The image of the homomorphism
 $$r^*: H^*(M, F) \longrightarrow H^*(\partial M, F),$$
 induced by the inclusion $r: \partial M \longrightarrow M$, is of dimension half of the dimension of $H^*(\partial M, F)$. More presicely, 
 if we denote $R^j: = \mathrm{im}( r^j: H^j(M, F) \longrightarrow H^j(\partial M, F))$, then 
 $$\dim R^j + \dim R^{2k-j} = \dim H^j(\partial M, F) \; \text{for all}\;  j.$$
 In particular, we have $\dim R^k = (1/2) \dim H^k(\partial M, F)$.
  \end{proposition}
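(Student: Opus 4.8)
The plan is to use Poincaré–Lefschetz duality together with the long exact sequence of the pair $(M,\partial M)$, and to exploit the fact that the composite $H^*(M,\partial M)\to H^*(M)\to H^*(M,\partial M)$ is, up to the sign inherent in cup-product/duality, "self-dual." Concretely, write $n=2k+1=\dim M$, so $\dim\partial M=2k$ is even. Since $F$ has characteristic zero, all cohomology is finitely generated and the universal coefficient theorem identifies $H^j(\,\cdot\,,F)$ with the $F$-dual of $H_j(\,\cdot\,,F)$. The two maps I will keep track of are $r^j\colon H^j(M)\to H^j(\partial M)$ (restriction) and its "dual" $\delta^j\colon H^j(\partial M)\to H^{j+1}(M,\partial M)$ (the connecting map), which fit into the long exact sequence
$$
\cdots\longrightarrow H^j(M,\partial M)\xrightarrow{\ \iota^j\ } H^j(M)\xrightarrow{\ r^j\ } H^j(\partial M)\xrightarrow{\ \delta^j\ } H^{j+1}(M,\partial M)\longrightarrow\cdots .
$$

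First I would record two duality statements. Lefschetz duality gives isomorphisms $H^j(M,\partial M)\cong H_{n-j}(M)\cong H^{n-j}(M)^{\vee}$ and $H^j(M)\cong H_{n-j}(M,\partial M)\cong H^{n-j}(M,\partial M)^{\vee}$; ordinary Poincaré duality on the closed manifold $\partial M$ gives $H^j(\partial M)\cong H^{2k-j}(\partial M)^{\vee}$. The key compatibility — this is the step I expect to be the main obstacle, since it requires chasing naturality of cap products with fundamental classes through the maps in the long exact sequence — is that under these identifications the restriction map $r^j\colon H^j(M)\to H^j(\partial M)$ is the transpose (up to sign) of the connecting map $\delta^{2k-j}\colon H^{2k-j}(\partial M)\to H^{2k-j+1}(M,\partial M)=H^{n-j}(M,\partial M)$. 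This is the standard "the boundary restriction and the boundary connecting homomorphism are adjoint via the intersection pairings" fact; I would either cite it or prove it by writing out $\langle r^j(a),\,x\rangle_{\partial M}=\langle a,\,r_*x\rangle$ and comparing with $\langle a,\,\partial(\,\cdot\,)\rangle$ using that $\partial[M]=[\partial M]$.

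Granting that compatibility, the counting is immediate from exactness. Exactness at $H^j(\partial M)$ says $\mathrm{im}(r^j)=\ker(\delta^j)$, hence $\dim R^j=\dim H^j(\partial M)-\dim\mathrm{im}(\delta^j)$. Now $\mathrm{im}(\delta^j)$ is the transpose of a map whose image is $R^{2k-j}$ (because $\delta^j$ is, up to sign, the transpose of $r^{2k-j}$), and a linear map and its transpose have equal rank; therefore $\dim\mathrm{im}(\delta^j)=\dim R^{2k-j}$. Combining,
$$
\dim R^j+\dim R^{2k-j}=\dim H^j(\partial M,F)\qquad\text{for all }j.
$$
Summing over all $j$ and using Poincaré duality $\dim H^j(\partial M)=\dim H^{2k-j}(\partial M)$ pairs up the terms so that $\sum_j\dim R^j=\tfrac12\sum_j\dim H^j(\partial M)=\tfrac12\dim H^*(\partial M,F)$, which is the first assertion. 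Taking $j=k$ in the displayed identity and using $R^{2k-k}=R^k$ gives $2\dim R^k=\dim H^k(\partial M,F)$, i.e. $\dim R^k=\tfrac12\dim H^k(\partial M,F)$; note this forces $\dim H^k(\partial M,F)$ to be even, consistent with the middle-dimensional intersection form on the odd-dimensional boundary situation. The only genuine content beyond bookkeeping is the adjunction between $r^\bullet$ and $\delta^\bullet$, so I would present that carefully and leave the rank-counting as a short formal deduction.
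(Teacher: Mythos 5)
Your argument is correct and is essentially the paper's: both rest on the Lefschetz-duality ladder between the long exact sequences of the pair $(M,\partial M)$ in cohomology and homology (which identifies the connecting map $\delta^j$, up to sign, with the transpose of $r^{2k-j}$), together with exactness at $H^j(\partial M,F)$. The only difference is packaging: the paper phrases the conclusion as ``$R^*$ is a maximal isotropic subspace of $H^*(\partial M,F)$ for the intersection pairing'' and appeals to linear algebra, while you count ranks of transposes directly, which has the small merit of making explicit the equality $\dim\mathrm{im}(\delta^j)=\dim R^{2k-j}$ that the maximality claim quietly relies on.
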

 \begin{proof}
 Given the oriented compact manifold $M$ with boundary $\partial M$ of dimension $m = 2k+1$, there is the uniquely determined fundamental class $O_M$ in
 $H_m(M, \partial M; \Z)$. The boundary operator in homology takes $O_M$ to a fundamental class $o_{\partial M}$ of $\partial M$ in $H_{m-1}(\partial M;\Z)$ (see, e.g. \cite[Chap.6, Sec.3, Cor. 10]{Span}).
 
 Now suppose that $N$ is an oriented compact manifold. Then the cap product with a fundamental class of $N$, that is, the assignment $x \mapsto x \frown o_N$, defines an isomorphism
 $H^i(N; F) \tilde{\longrightarrow} H_{n-i}(N; F)$ where the coefficient system $F$ is any field of characteristic zero.
 
 More generally, in the case of the oriented compact manifold $M$ with boundary, then the map $x \mapsto x \frown O_M$ defines an isomorphism
 $H^i(M, \partial M; F) \tilde{\longrightarrow} H_{n-i}(M; F)$ (see \cite[Chap. 6, sec. 3, Thm. 12]{Span}). In addition, the pairing
 $$H^i(M, \partial M;F) \times H^{m-i}(M; F) \longrightarrow H^m(M, \partial M; F) \longrightarrow H_0(M; F) = F,$$
 defined by the assignment
 $$(x, y) \mapsto x \smile y \mapsto \langle x \smile y, O_M \rangle = \langle x, y \frown O_M \rangle,$$
 where $(x, y) \mapsto x \smile y $ denotes the cup product, and $\langle \; , \; \rangle$ the Kronecker product, is a dual pairing.
 
 The pair $(M, \partial M)$, with the inclusion $r: \partial M \longrightarrow M$, gives rise to a long exact sequence in cohomology as well as in homology (with coefficients in $F$). Therefore, taking into account duality for manifolds with boundary, 
 the following diagram with exact rows is commutative, and all vertical arrows are isomorphisms (up to sign):
 
 \begin{equation*}
\xymatrix{
  \ar[r] & H^{m-i-1}(M)\ar[d]^-{\frown O_M} \ar[r]^-{r^*} & H^{m-i-1}(\partial M) \ar[d]^-{\frown o_{\partial M}} \ar[r]^-{\delta^*} & H^{m-i}(M, \partial M) \ar[d]^-{\frown O_M} \ar[r]  & H^{m-i}(M) \ar[d]^-{\frown O_M} \ar[r] & \\
  \ar[r] & H_{i+1}(M, \partial M) \ar[r]^-{\delta_*} & H_i(\partial M)  \ar[r]^-{r_*} & H_{i}(M)  \ar[r] & H_i(M, \partial M) \ar[r] &.}
\end{equation*}
Given any $x \in R^* =  \mathrm{im}( r^*: H^*(M, F) \longrightarrow H^*(\partial M, F))$, we have $\delta^*(x) = 0$, and therefore $r_*(x \frown o_{\partial M}) = 0$ holds. It follows that $\langle H^*(M), r_*(x \frown o_{\partial M})\rangle = \{0\}$. Using the fact that $r_*$ and $r^*$ are transposed morphisms with respect to the Kronecker product, we get $\langle R^*, x \frown  o_{\partial M}\rangle = \{0\}$. By the duality between the cap-product and the cup-product, we see $\langle R^* \smile x,  o_{\partial M}\rangle = \{0\}$. It follows that, with respect to the dual pairing defined by $(x, y) \mapsto \langle x \smile y,  o_{\partial M}\rangle$, the space $R^*$ is a maximal isotropic subspace of $H^*(\partial M)$. Taking into account the cohomological degrees in question and using standard linear algebra, this implies the assertions.
  \end{proof}
  
 \begin{remark}\label{codim}
 We  retain the  notation in Proposition \ref{image}. By Poincare duality, we have $H^{d(G}(\overline{Y}_{\Gamma}, \partial \overline{Y}_{\Gamma}, \C) \cong H_0(\overline{Y}_{\Gamma}, \C)$. Since the latter space is one-dimensional, the image $\sR^{d(G)-1}$ of $r^{d(G)-1}$ in $H^{d(G)-1}(\partial \overline{Y}_{\Gamma}, \C)$ has codimension one.
 \end{remark}
 
\section{The size of the cohomology at infinity}
\subsection{The set  of $\Gamma$-conjugacy classes of minimal parabolic $\Q$-subgroups of $G$}
 Given a torsion-free arithmetic subgroup $\Gamma \subset G(\Q)$, the connected components of the  boundary $\partial \overline{Y}_{\Gamma} $ of the compact manifold $\overline{Y}_{\Gamma}$ are parametrised by the set  of $\Gamma$-conjugacy classes of minimal parabolic $\Q$-subgroups of $G$. Since all proper parabolic $\Q$-subgroups of $G$ are conjugate under $G(\Q)$ to the standard minimal parabolic $\Q$-subgroup $P_0$, this set is described by the set $\Gamma \backslash G(\Q) /P_0(\Q)$ of double cosets. The number of double cosets with respect to $(\Gamma, P_0(\Q))$ is finite (see \cite[Prop. 15. 6]{Bo1}).

 Using the notation of Section \ref{schemes}, in particular, \ref{strongapp}, 
 the group $\Gamma_{\Lambda} = G_{\Lambda}(\Z)$ of integral points of the group scheme $G_{\Lambda}$ can be written as 
$\Gamma_{\Lambda} =  G(\Q) \cap K_f$ where $K_f = \prod_{v \in V_{\Q, f}} K_v$ with $K_v = G_{\Lambda}(\mathcal{O}_v)$. Accordingly, for the principal congruence subgroup $\Gamma_{\Lambda}(\frak{a}) $, attached to a proper ideal $\frak{a}$, we have 
$\Gamma_{\Lambda}(\frak{a}) = G(\Q) \cap  K_f(\frak{a}).$ 
Since the group $G_{\Lambda} \times \Q$ has the strong approximation property, the assignment 
$(G(\Q) \cap K_f) g P_0(\Q) \mapsto K_f g P_0(\Q)$ gives rise to a bijection
\begin{equation}
\Gamma_{\Lambda} \backslash G(\Q) / P_0(\Q) = (G(\Q) \cap K_f) \backslash G(\Q) / P_0(\Q) \tilde{\longrightarrow} K_f \backslash G(\A_f) / P_0(\Q).
\end{equation} 
Given a finite place $v \in V_{k, f}$, we have the Iwasawa decomposition $G(k_v) = G_{\Lambda}(\mathcal{O}_v) P_0(k_v)$, therefore, $G(\A_f) = K_f P_0(\A_f)$. It follows that the representatives of the double cosets $K_f \backslash G(\A_f) / P_0(\Q)$ can be chosen in $P_0(\A_f)$.
Indeed, we may identify  (see e.g. \cite[Prop. 7.5]{Bo1})
\begin{equation}\label{ident}
K_f \backslash G(\A_f) / P_0(\Q) \tilde{\longrightarrow} (K_f \cap P_0(\A_f)) \backslash P_0(\A_f) / P_0(\Q).
\end{equation}
We note that the cardinality of the latter set is the class number of $P_0$ with respect to $K_f \cap P_0(\Q)$. Taking into account the Levi decomposition $P_0 = L_0 N_0$ of $P_0$, this class number is equal to the class number of $L_0$ with respect to $K_f \cap L_0(\Q)$.  Using \cite[Prop.2.4]{Bo1}, this follows from the fact that the class number of the unipotent radical is one (see \cite[Cor. 2.5]{Bo1}.

Using these reduction steps to a question regarding centraliser of algebraic $\Q$-tori one has derived the following result:
  
  \begin{proposition}\label{cusps}
  Let $D$ be  a   central simple  division $\Q$-algebra of degree two, and let $h_D$ be the class number of $D$ (see Section \ref{orders}). Let  $G = SL_{\Lambda} \times_{\Z} \Q$ be the  algebraic $\Q$-group attached to a maximal order $\Lambda$ in the central simple $\Q$-algebra $M= M_2(D)$. Then  the number of 
  $\Gamma_{\Lambda}$- conjugacy classes of minimal parabolic $\Q$-subgroups of $G$ is given as follows:
\begin{itemize}
\item[-]{If $D$ is a definite $\Q$-algebra, we have $\vert \Gamma_{\Lambda} \backslash G(\Q) / P_0(\Q) \vert  = h^2_D$.}
\item[-]{If $D$ is an indefinite $\Q$-algebra, we have $\vert \Gamma_{\Lambda} \backslash G(\Q) / P_0(\Q) \vert  = h_D$.}
\end{itemize}
In both cases,  this quantity is independent of the choice of the maximal order $\Lambda$.
\end{proposition}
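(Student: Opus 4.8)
The starting point is the reduction recorded above: the set $\Gamma_{\Lambda}\backslash G(\Q)/P_0(\Q)$ is in bijection with the double coset space $(K_f\cap L_0(\A_{\Q,f}))\backslash L_0(\A_{\Q,f})/L_0(\Q)$ attached to the Levi subgroup $L_0=Z_{SL_M}(T)$ of $P_0=L_0N_0$ (using strong approximation for $G$, the Iwasawa decomposition $G(\A_{\Q,f})=K_fP_0(\A_{\Q,f})$, \cite[Prop.~7.5]{Bo1}, the Levi decomposition together with \cite[Prop.~2.4]{Bo1}, and the fact that the class number of the unipotent radical is one, \cite[Cor.~2.5]{Bo1}); hence it remains to compute the class number $h(L_0):=\lvert(K_f\cap L_0(\A_{\Q,f}))\backslash L_0(\A_{\Q,f})/L_0(\Q)\rvert$. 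First I would make $L_0$ explicit through the reduced norm: the assignment $\mathrm{diag}(x,y)\mapsto(x,y)$ identifies $L_0$ with the subgroup $\{(x,y):\rn_{D/\Q}(x)\,\rn_{D/\Q}(y)=1\}$ of $GL_{1,D}\times GL_{1,D}$, and, writing $\widehat{D}^{\times}:=(D\otimes_{\Q}\A_{\Q,f})^{\times}$ and $\widehat{\Lambda}_D^{\times}:=\prod_{v}\Lambda_{D,v}^{\times}$, the intersection $K_v\cap L_0(\Q_v)$ corresponds to the pairs $(x,y)$ with $x,y\in\Lambda_{D,v}^{\times}$; thus $K_f\cap L_0(\A_{\Q,f})$ corresponds to $\{(x,y)\in\widehat{\Lambda}_D^{\times}\times\widehat{\Lambda}_D^{\times}:\rn(x)\,\rn(y)=1\}$.

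Next I would invoke the idelic description of left ideal classes: since each $\Lambda_{D,v}$ is a maximal, hence principal-ideal, order in $D_v$, localisation gives a bijection $LF_1(\Lambda_D)\tilde{\longrightarrow}\widehat{\Lambda}_D^{\times}\backslash\widehat{D}^{\times}/D^{\times}$, a set of cardinality $h_D$ (see \cite[Thm.~26.4]{Re} and its proof). Projection to the first coordinate defines a surjective map $h(L_0)\longrightarrow LF_1(\Lambda_D)$, surjectivity being clear because $\rn\colon\widehat{D}^{\times}\to\A_{\Q,f}^{\times}$ is onto, so any admissible first coordinate extends to an element of $L_0(\A_{\Q,f})$. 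Everything now turns on the fibres of this map, and this is exactly where the two cases diverge, the decisive input being the size of $\rn(D^{\times})$, which by the Hasse--Schilling norm theorem equals $\Q_{>0}^{\times}$ when $D$ is definite and all of $\Q^{\times}$ when $D$ is indefinite.

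If $D$ is definite, I would upgrade the first-coordinate map to a bijection $h(L_0)\tilde{\longrightarrow}LF_1(\Lambda_D)\times LF_1(\Lambda_D)$, $[(x,y)]\mapsto([x],[y])$. Surjectivity uses $\rn(\widehat{\Lambda}_D^{\times})=\widehat{\Z}^{\times}$ together with $\widehat{\Z}^{\times}\cdot\Q_{>0}^{\times}=\A_{\Q,f}^{\times}$, which allows one to adjust the second coordinate within its ideal class so as to satisfy $\rn(x)\,\rn(y)=1$; for injectivity one compares two representatives $(x,y),(x',y')$ with $x'=uxd$, $y'=u'yd'$ ($u,u'\in\widehat{\Lambda}_D^{\times}$, $d,d'\in D^{\times}$), and the identities $\rn(x)\,\rn(y)=1=\rn(x')\,\rn(y')$ force $\rn(u)\,\rn(u')=1$ and $\rn(d)\,\rn(d')\in\widehat{\Z}^{\times}\cap\Q_{>0}^{\times}=\{1\}$, so that $(u,u')\in K_f\cap L_0(\A_{\Q,f})$ and $(d,d')\in L_0(\Q)$, whence $[(x,y)]=[(x',y')]$. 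This yields $h(L_0)=h_D^2$. If $D$ is indefinite, then $D\otimes_{\Q}\R\cong M_2(\R)$, so the simply connected $\Q$-simple group $SL_{1,D}=\ker(\rn\colon GL_{1,D}\to\G_m)$ is non-compact at the real place, hence has strong approximation (\cite{Kneser}): the group $D^{1}$ of reduced-norm-one elements of $D^{\times}$ is dense in its finite-adelic analogue $\widehat{D}^{1}$. In this case the first-coordinate map $h(L_0)\to LF_1(\Lambda_D)$ is itself injective: a class in its fibre over $[x]$ has a representative $(\tilde x,\tilde y)$ with $\tilde x=u_1xd_1$, and since $\rn(D^{\times})=\Q^{\times}$ one may replace it by an $L_0$-equivalent pair $(x,y'')$ with $\rn(y'')=\rn(x)^{-1}$, so that $y''=zy$ for some $z\in\widehat{D}^{1}$; since $\widehat{\Lambda}_D^{1}$ is an open subgroup of $\widehat{D}^{1}$ and the dense subset $zyD^{1}y^{-1}$ therefore meets it, one obtains $z=u'yd'y^{-1}$ with $u'\in\widehat{\Lambda}_D^{1}$, $d'\in D^{1}$, so that $(x,y'')=(1,u')\cdot(x,y)\cdot(1,d')$ is $L_0$-equivalent to $(x,y)$. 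This yields $h(L_0)=h_D$.

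Finally, the independence of the choice of maximal order is built in: $h_D=\lvert LF_1(\Lambda_D)\rvert$ does not depend on $\Lambda_D$ (\cite[Thm.~26.4]{Re}), and passing from $\Lambda_D$ to another maximal order replaces $\widehat{\Lambda}_D^{\times}$ (and $\widehat{\Lambda}_D^{1}$) by a conjugate inside $\widehat{D}^{\times}$ — all maximal orders of $D_v$ being conjugate at each finite place — which leaves the double coset count unchanged. I expect the principal difficulty to be the fibre analysis: in the definite case, checking that the reduced-norm bookkeeping produces exactly the product $LF_1(\Lambda_D)\times LF_1(\Lambda_D)$ with no residual identification, and in the indefinite case, turning strong approximation for $SL_{1,D}$ into the collapse of each fibre to a single point via the density-meets-open-subgroup argument applied to the conjugate $zyD^{1}y^{-1}$. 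One also has to verify with care that the local intersections $K_v\cap L_0(\Q_v)$ are precisely $\{(x,y):x,y\in\Lambda_{D,v}^{\times},\ \rn(x)\,\rn(y)=1\}$, since the entire count rests on that identification.
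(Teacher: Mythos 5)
Your argument is correct, but it takes a genuinely different route at the decisive step: after the reduction to the class number of the Levi $L_0$ (which the paper carries out in the text preceding the proposition and which you reproduce faithfully, via strong approximation, the Iwasawa decomposition, \cite[Prop.~7.5, Prop.~2.4, Cor.~2.5]{Bo1}), the paper's proof of the proposition itself consists only of citations to \cite[Satz 2.1]{KOS}, \cite[Lemma 5, Thm.~4]{Koch} and \cite[Chap.~5]{Lacher}, whereas you actually perform the class-number computation for $L_0$. Your identification of $L_0$ with $\{(x,y)\in GL_{1,D}\times GL_{1,D}:\rn_{D/\Q}(x)\rn_{D/\Q}(y)=1\}$ is the full centraliser $Z_{SL_M}(T)$, which is what the double-coset count requires (the paper's displayed formula for $L_0(\Q)$ lists only the pairs $(x,x^{-1})$, so your more careful description is the usable one), and both case computations are sound: in the definite case the Hasse--Schilling constraint $\rn(D^{\times})=\Q^{\times}_{>0}$ together with $\widehat{\Z}^{\times}\cap\Q^{\times}_{>0}=\{1\}$ decouples the two coordinates and yields $h_D^2$, while in the indefinite case strong approximation for $SL_{1,D}$ (available exactly because $D^1_{\infty}\cong SL_2(\R)$ is non-compact) collapses each fibre of the first-coordinate map and yields $h_D$ (which by Eichler's theorem equals $1$ over $\Q$, consistent with the formula). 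What your route buys is a self-contained proof that makes transparent the source of the dichotomy --- failure versus validity of strong approximation for the norm-one group of $D$ --- at the cost of redoing, in essentially the same id\`elic language, what the cited references establish; the two points you flag as delicate (the local identification of $K_v\cap L_0(\Q_v)$ and the fibre analysis) are indeed the only places where care is needed, and your treatment of them is complete.
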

\begin{proof}
In the case of a  definite $\Q$-algebra $D$, we refer to \cite[Satz 2.1]{KOS}), or, in a more general context, to \cite[Lemma 5]{Koch}. 
In the case of an indefinite  division $\Q$-algebra,  the assertion is shown in \cite[Chap. 5]{Lacher} or, with a different proof, in \cite[Thm. 4]{Koch}. 
\end{proof}
  
  Next, given a  principal congruence subgroup $ \Gamma_{\Lambda}(\frak{p})$ of $G(\Q)$, $\frak{p}$ a prime ideal at which $D$ splits,
we intend to determine the number $\mathrm{cusp}(G, \Gamma_{\Lambda}(\frak{p})): = \vert \Gamma_{\Lambda}(\frak{p}) \backslash G(\Q) / P_0(\Q) \vert$ of  $\Gamma_{\Lambda}(\frak{p})$- conjugacy classes of minimal parabolic $\Q$-subgroups of $G$, that is, the number of  connected components of the  boundary $\partial \overline{Y}_{\Gamma_{\Lambda}(\frak{p})} $ of the compact manifold $\overline{Y}_{\Gamma_{\Lambda}(\frak{p})}$.

   \begin{proposition}\label{cusps}
  Let $D$ be  a   central simple  division $\Q$-algebra of degree two, and let $h_D$ be the class number of $D$. Let  $G = SL_{\Lambda} \times_{\Z} \Q$ be the  algebraic $\Q$-group attached to a maximal order $\Lambda = M_2(\Lambda_D)$ in the central simple $\Q$-algebra $M= M_2(D)$. Then, given a  principal congruence subgroup $ \Gamma_{\Lambda}(\frak{p}^e)$ of $G(\Q)$, $\frak{p}^e$ a power of a prime ideal at which $D$ splits,

\begin{itemize}
\item[-]{if $D$ is a definite $\Q$-algebra, we have $\mathrm{cusp}(G, \Gamma_{\Lambda}(\frak{p}^e))  = h^2_D N(\frak{p^e})^{-4} [\Gamma_{\Lambda} : \Gamma_{\Lambda}(\frak{p^e})]$.}
\item[-]{if $D$ is an indefinite $\Q$-algebra, we have $\mathrm{cusp}(G, \Gamma_{\Lambda}(\frak{p}^e))  = h_D \mu_{\Lambda_D}(\frak{p}^e)^{-1} N(\frak{p}^e)^{-4} [\Gamma_{\Lambda} : \Gamma_{\Lambda}(\frak{p}^e)]$.}
\end{itemize}
where $\mu_{\Lambda_D}(\frak{p}^e)$ denotes the cardinality of the finite group  $\Lambda^{\times}_D/\Lambda^{\times}_D(\frak{p}^e)$, i.e. the units in $\Lambda_D$  modulo the elements which are congruent to one modulo $\frak{p}^e$.
\end{proposition}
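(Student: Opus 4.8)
The plan is to compare the level-$\frak{p}^e$ count with the level-one count established in the preceding Proposition, by passing to the adelic description and running an orbit--stabiliser argument; the only substantial input is a local index computation at $\frak{p}$.

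First I would invoke strong approximation exactly as in Section~\ref{strongapp}: the bijection $\Gamma_{\Lambda}(\frak{p}^e)\backslash G(\Q)/P_0(\Q)\ \tilde{\longrightarrow}\ K_f(\frak{p}^e)\backslash G(\A_{\Q,f})/P_0(\Q)$ identifies $\mathrm{cusp}(G,\Gamma_{\Lambda}(\frak{p}^e))$ with the cardinality of the right-hand set. The inclusion $K_f(\frak{p}^e)\subset K_f$ produces a surjection onto $K_f\backslash G(\A_{\Q,f})/P_0(\Q)$ whose fibres are the orbits of the finite group $K_f/K_f(\frak{p}^e)$ acting by left translation; since $\frak{p}$ splits in $D$ one has $\Lambda_{\frak{p}}\cong M_4(\Z_{\frak{p}})$, hence $K_f/K_f(\frak{p}^e)\cong G_{\Lambda}(\Z/\frak{p}^e)\cong SL_4(\Z/p^e)$, a group whose order equals $[\Gamma_{\Lambda}:\Gamma_{\Lambda}(\frak{p}^e)]$ because the group scheme $G_{\Lambda}$ is smooth over $\Z$ (so the reduction map is surjective). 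By the preceding Proposition the base $K_f\backslash G(\A_{\Q,f})/P_0(\Q)$ has $h_D^2$ elements if $D$ is definite and $h_D$ elements if $D$ is indefinite. Orbit--stabiliser then shows that the fibre over the class of a representative $g$ — which, via the Iwasawa decomposition and the identification \eqref{ident}, may be taken in $P_0(\A_{\Q,f})$ — has cardinality $[\Gamma_{\Lambda}:\Gamma_{\Lambda}(\frak{p}^e)]\, c_g^{-1}$, where $c_g$ is the order of the image of the stabiliser $K_f\cap gP_0(\Q)g^{-1}$ in $SL_4(\Z/p^e)$. Granting that $c_g$ is independent of $g$ and equals $N(\frak{p}^e)^4$ in the definite case and $\mu_{\Lambda_D}(\frak{p}^e)\,N(\frak{p}^e)^4$ in the indefinite case, summing $[\Gamma_{\Lambda}:\Gamma_{\Lambda}(\frak{p}^e)]\, c_g^{-1}$ over the base yields the two stated formulas.

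To compute $c_g$ I would use the Levi decomposition $P_0=L_0N_0$. The contribution of the unipotent radical is clean: $\Gamma_{\Lambda}\cap gN_0(\Q)g^{-1}$ is a lattice in $N_0(\A_{\Q,f})$ which, since $N_0$ carries its standard integral structure $\cong M_2(\Z_{\frak{p}})$ at the split prime $\frak{p}$ and $N_0(\Q)$ is dense in $N_0(\A_{\Q,f})$, reduces onto $N_0(\Z/\frak{p}^e)$, a group of order $N(\frak{p}^e)^4$; moreover this image already exhausts the unipotent part of $SL_4(\Z/p^e)$, so it contributes the full factor $N(\frak{p}^e)^4$ regardless of $g$. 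The contribution of the Levi factor is where the dichotomy enters, and it is governed by the arithmetic of the maximal order $\Lambda_D$: for indefinite $D$ the group $SL_1(D)$ has the strong approximation property, and one finds that the Levi part reduces onto a group of order $\mu_{\Lambda_D}(\frak{p}^e)$, namely the image of $\Lambda_D^{\times}$ modulo $\frak{p}^e$ (together with the relevant reduced-norm bookkeeping), whereas for definite $D$ the corresponding Levi group is $\Q$-anisotropic and compact at infinity, so strong approximation fails and this last factor disappears. Independence of $c_g$ on the representative I would then deduce from the conjugacy of all minimal parabolic $\Q$-subgroups of $G$ under $G(\Q)$ together with the fact that, $\frak{p}$ being split, their $\Lambda$-integral points all become conjugate inside $SL_4(\Z/p^e)$ after reduction modulo $\frak{p}^e$.

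The hard part will be precisely the evaluation of $c_g$, and specifically its Levi component. In the indefinite case this rests on strong approximation for the norm-one group and can be carried out following \cite{Koch}, \cite{Lacher}. The definite case is the delicate one: strong approximation fails, so the naive orbit count must be replaced by an argument through ray-class-number identities for the quaternion algebra $D$ — it is the net outcome of such a computation, the absence of any $\mu_{\Lambda_D}(\frak{p}^e)$-correction, that is recorded in \cite{KOS}. Accordingly, I expect the proof to reduce, after the strong-approximation and orbit--stabiliser steps above, to quoting the relevant index computations from \cite{KOS}, \cite{Koch} for the two cases, exactly as was done for the preceding Proposition.
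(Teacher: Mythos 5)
Your proposal follows the paper's proof essentially verbatim: the paper likewise uses strong approximation to pass to the double cosets $K_f(\frak{p}^e)\backslash G(\A_{\Q,f})/P_0(\Q)$, lets the finite group $K_f/K_f(\frak{p}^e)$ act by left translation with orbit space the level-one set of cardinality $h_D^2$ resp.\ $h_D$, and evaluates the stabiliser of a double coset (your $c_g$) as $\Lambda_D/\frak{p}^e\Lambda_D$ in the definite case resp.\ its extension by $\Lambda_D^{\times}/\Lambda_D^{\times}(\frak{p}^e)$ in the indefinite case, using that representatives may be taken in $L_0$ and that $N_0$ has strong approximation. The only divergence is one of phrasing at the same point where both arguments are terse: where you motivate the absence of the unit factor in the definite case by the failure of strong approximation and defer to \cite{KOS}, the paper simply asserts that the stabilisers there are isomorphic to the additive group $\Lambda_D/\frak{p}^e\Lambda_D$.
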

\begin{proof} Using Proposition \ref{cusps}, we have to pass from the level given by the  group $\Gamma_{\Lambda}$ to the level given by the principal congruence subgroup $\Gamma_{\Lambda}(\frak{p}^e)$. Therefore we have to compare the set of double cosets $K_f(\frak{p}^e) \backslash G(\A_f) / P_0(\Q)$ with $K_f \backslash G(\A_f) / P_0(\Q).$. On the former set the group $K_f/ K_f(\frak{p}^e)$ acts via left multiplication, and the set of orbits is $K_f \backslash G(\A_f) / P_0(\Q).$ However, we still have to determine the stabiliser of a given double coset.
We know by our previous discussion, in particular, see identification (\ref{ident}), that the classes in $K_f \backslash G(\A_f) / P_0(\Q).$ are determined by elements in $L_0$, and that the unipotent radical $N_0$ has strong approximation. Therefore, if $D$ is a definite $\Q$-algebra, 
the stabilisers are isomorphic to  the additive group $\Lambda_D/\frak{p}^e \Lambda_D$. 
If $D$ is an indefinite $\Q$-algebra,
the stabilisers are isomorphic to the semi-direct product of the additive group $\Lambda_D/\frak{p}^e \Lambda_D$ and the finite group $\Lambda^{\times}_D/\Lambda^{\times}_D(\frak{p}^e)$, that is,  the units in $\Lambda_D$  modulo the elements which are congruent to one modulo $\frak{p}^e$.
\end{proof}

 Since the affine $\Z$-group scheme $G_{\Lambda} = SL_{\Lambda}$ of finite type associated with the maximal order $\Lambda$ in $M = M_2(D)$ is smooth, the strong approximation property of $G = G_{\Lambda} \times_{\Z} \Q$ implies that there is a short exact sequence of groups
\begin{equation}
1 \longrightarrow \Gamma_{\Lambda}(\frak{a})  \longrightarrow SL_{\Lambda}(\Z) = \Gamma_{\Lambda}   \longrightarrow SL_{\Lambda}(\Z/\frak{a})  \longrightarrow 1.
\end{equation}
This allows us to determine the index of the principal congruence subgroup  $\Gamma_{\Lambda}(\frak{a})$ in $SL_{\Lambda}(\Z)$. It is given by the cardinality of $SL_{\Lambda}(\Z/\frak{a})$. By means  of the strong approximation property of the group $G$, we may reduce this question to a local problem. Using the notation in Section \ref{strongapp}, we 
have
$\Gamma_{\Lambda} =  G(k) \cap \prod_{v \in V_{\Q, f}} K_v$ where $K_v = G_{\Lambda}(\mathcal{O}_v)$. 
If the ideal $\frak{a}$ has the factorisation, unique up to order, $\frak{a} = \prod \frak{p}_v^{e_v(\frak{a})},$ we have 
$\Gamma_{\Lambda}(\frak{a}) = K(\frak{a})$ where $K(\frak{a}): = G(k) \cap \prod_{v \in V_{\Q, f}} K_v(\frak{p}_v^{e_{v}(\frak{a})}).$ 
\begin{lemma}\label{power}
Given a principal congruence subgroup $\Gamma_{\Lambda}(\frak{a})$ attached to an ideal $\frak{a} = \prod_{v \in V_{\Q, f}} \frak{p}_v^{e_v(\frak{a})}$, we have
\begin{equation}
[\Gamma_{\Lambda} : \Gamma_{\Lambda}( \frak{a})] = \prod_{v \in V_{\Q, f}} [K_v : K_v(\frak{p}_v^{e_{v}(\frak{a})})].
\end{equation}
For any fixed place $v \in V_{\Q, f}$ we have $[K_v : K_v(\frak{p}_v^{e_{v}(\frak{a})})] = q_v^{(e_{v}(\frak{a})-1) \dim G} \vert  G_{\Lambda}(\mathcal{O}_v/\frak{p}_v \mathcal{O}_v) \vert$
where $\dim G$ denotes the dimension of $G$. We have $\dim G = 15$.
\end{lemma}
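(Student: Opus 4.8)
The plan is to reduce the global index to a product of local indices using the short exact sequence displayed just before the lemma, and then to compute each local index from the smoothness of the $\Z$-group scheme $G_{\Lambda} = SL_{\Lambda}$ recorded in Section \ref{schemes}. First I would note that the exact sequence $1 \to \Gamma_{\Lambda}(\frak{a}) \to \Gamma_{\Lambda} \to SL_{\Lambda}(\Z/\frak{a}) \to 1$ gives $[\Gamma_{\Lambda} : \Gamma_{\Lambda}(\frak{a})] = |SL_{\Lambda}(\Z/\frak{a})|$. Writing $\frak{a} = (a)$ with $a = \prod_v p_v^{e_v(\frak{a})}$, the Chinese Remainder Theorem supplies a ring isomorphism $\Z/\frak{a} \cong \prod_v \Z/\frak{p}_v^{e_v(\frak{a})}$ over the finitely many $v$ dividing $\frak{a}$, and since the functor $R \mapsto SL_{\Lambda}(R)$ is representable it carries this finite product of rings to a product of groups, so $SL_{\Lambda}(\Z/\frak{a}) \cong \prod_v SL_{\Lambda}(\Z/\frak{p}_v^{e_v(\frak{a})})$. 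Identifying $\Z/\frak{p}_v^{e_v(\frak{a})}$ with $\mathcal{O}_v/\frak{p}_v^{e_v(\frak{a})}\mathcal{O}_v$ turns the $v$-th factor into $G_{\Lambda}(\mathcal{O}_v/\frak{p}_v^{e_v(\frak{a})}\mathcal{O}_v)$. Independently, smoothness of $G_{\Lambda}$ over $\mathcal{O}_v$ makes the reduction homomorphism $K_v = G_{\Lambda}(\mathcal{O}_v) \to G_{\Lambda}(\mathcal{O}_v/\frak{p}_v^{e_v(\frak{a})}\mathcal{O}_v)$ surjective (infinitesimal lifting, i.e.\ Hensel's lemma) with kernel $K_v(\frak{p}_v^{e_v(\frak{a})})$ by definition, so $[K_v : K_v(\frak{p}_v^{e_v(\frak{a})})] = |G_{\Lambda}(\mathcal{O}_v/\frak{p}_v^{e_v(\frak{a})}\mathcal{O}_v)|$; the right-hand product in the lemma is then finite since this index is $1$ whenever $\frak{p}_v \nmid \frak{a}$. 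Assembling these identifications yields the product formula.

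For the local index I would fix $v$, set $e = e_v(\frak{a})$, and let $\kappa_v = \mathcal{O}_v/\frak{p}_v\mathcal{O}_v$ be the residue field, of cardinality $q_v$. Smoothness of $G_{\Lambda}$ over $\mathcal{O}_v$ shows that for each $j \geq 1$ the reduction $G_{\Lambda}(\mathcal{O}_v/\frak{p}_v^{j+1}\mathcal{O}_v) \to G_{\Lambda}(\mathcal{O}_v/\frak{p}_v^{j}\mathcal{O}_v)$ is surjective with kernel canonically the additive group $\mathrm{Lie}(G_{\Lambda}\otimes_{\Z}\kappa_v)\otimes_{\kappa_v}(\frak{p}_v^{j}/\frak{p}_v^{j+1})$ --- the standard description of the fibre over the identity for the square-zero extension $\mathcal{O}_v/\frak{p}_v^{j+1}\mathcal{O}_v \to \mathcal{O}_v/\frak{p}_v^{j}\mathcal{O}_v$. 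As $\frak{p}_v^{j}/\frak{p}_v^{j+1}$ is one-dimensional over $\kappa_v$ and $\dim_{\kappa_v}\mathrm{Lie}(G_{\Lambda}\otimes_{\Z}\kappa_v) = \dim G$ (the relative dimension of the smooth scheme $G_{\Lambda}$, which equals the dimension of its generic fibre $G$), each such kernel has cardinality $q_v^{\dim G}$. Telescoping through the $e-1$ steps from level $e$ to level $1$ then gives $|G_{\Lambda}(\mathcal{O}_v/\frak{p}_v^{e}\mathcal{O}_v)| = q_v^{(e-1)\dim G}\,|G_{\Lambda}(\mathcal{O}_v/\frak{p}_v\mathcal{O}_v)|$, and combined with $[K_v : K_v(\frak{p}_v^{e})] = |G_{\Lambda}(\mathcal{O}_v/\frak{p}_v^{e}\mathcal{O}_v)|$ from the previous step this is the local formula. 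I expect the one genuinely delicate point to be exactly this identification of the congruence-filtration quotients with $\mathrm{Lie}(G_{\Lambda}\otimes_{\Z}\kappa_v)\otimes_{\kappa_v}(\frak{p}_v^{j}/\frak{p}_v^{j+1})$: it is here that the smoothness of $SL_{\Lambda}$ for a maximal order $\Lambda$ (Section \ref{schemes}) is indispensable; everything else is bookkeeping.

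Finally, I would record that $\dim G = 15$. This follows because $G = SL_M$ is an inner form of $SL_4$ (Section \ref{schemes}) and so has the same dimension as $SL_4$, namely $4^2 - 1 = 15$; equivalently, $G$ is the kernel of the surjective homomorphism $\det: GL_M \to \G_m$ and $\dim GL_M = \dim_{\Q} M_2(D) = 16$, whence $\dim G = 16 - 1 = 15$.
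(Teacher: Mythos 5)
Your proposal is correct and follows essentially the same route as the paper: the global index is reduced to a product of local indices via strong approximation (you package this through the displayed exact sequence onto $SL_{\Lambda}(\Z/\frak{a})$ plus the Chinese Remainder Theorem, the paper through the identity $K_f=\Gamma_{\Lambda}K(\frak{a})$, but these rest on the same facts), and the local index is computed by the same telescoping through the congruence filtration, where smoothness gives surjectivity of the reduction maps and identifies each successive kernel with a $\dim G$-dimensional vector group over the residue field. Your explicit identification of that kernel with $\mathrm{Lie}(G_{\Lambda}\otimes_{\Z}\kappa_v)\otimes_{\kappa_v}(\frak{p}_v^{j}/\frak{p}_v^{j+1})$ merely spells out the step the paper leaves as "induction over the exponent $e$".
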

\begin{proof} Firstly, given the two subgroups $\Gamma_{\Lambda} =  G(k) \cap \prod_{v \in V_{\Q, f}} K_v$ and  
$K(\frak{a}) = \prod_{v \in V_{\Q, f}} K_v(\frak{p}_v^{e_{v}(\frak{a})})$ of $G(\A_{\Q, f})$ whereas the latter is normal,  there is a natural bijection between the two quotient groups
$$\Gamma_{\Lambda}/\Gamma_{\Lambda}(\frak{a}) = \Gamma_{\Lambda}/ \Gamma_{\Lambda} \cap K(\frak{a})  \tilde{\longrightarrow} (\Gamma_{\Lambda} K(\frak{a}))/K(\frak{a}) = 
(G(k) \cap \prod_{v \in V_{\Q, f}} K_v) K(\frak{a}) / K(\frak{a}).$$
As a consequence of the strong approximation property, we have $G(\A_{\Q, f}) = G(k) K(\frak{a})$. 
Therefore, the claim follows.

Secondly, since the group $G$ is smooth, the morphism $K_v = G_{\Lambda}(\mathcal{O}_v) \longrightarrow G_{\Lambda}(\mathcal{O}_v/\frak{p}^e_v\mathcal{O}_v))$ is surjective for any exponent $e$. By definition, its kernel is $K_v(\frak{p}^e_v)$. Therefore, for $e = 1$, the index $[K_v : K_v(\frak{p}_v)]$ is equal to the order of
 $G_{\Lambda}(\mathcal{O}_v/\frak{p}_v \mathcal{O}_v)$. Note that $\mathcal{O}_v/\frak{p}_v \mathcal{O}_v$ is the residue class field, say, of order $q_v = p_v^f$, $p_v$ a prime,  of the local field $k_v$. For every exponent $r \geq 1$, we have the isomorphism $\frak{p}_v^r/ \frak{p}_v^{r+1} \tilde{\longrightarrow} \mathcal{O}_v/\frak{p}_v$. Therefore, 
 we obtain via induction over the exponent $e$, that 
 $[K_v : K_v(\frak{p}_v^{e_{v}(\frak{a})})] = q_v^{(e_{v}(\frak{a})-1) \dim G} \vert  G_{\Lambda}(\mathcal{O}_v/\frak{p}_v \mathcal{O}_v) \vert.$
\end{proof}

Next we determine the (local) index  $[K_v : K_v(\frak{p}_v)] = \vert G_{\Lambda}(\mathcal{O}_v/\frak{p}_v \mathcal{O}_v) \vert.$ We have to distinguish two cases:

\begin{lemma}\label{index}

  Let $D$ be  a   central simple  division $\Q$-algebra of degree two, and let   $G_{\Lambda}  = SL_{\Lambda}$ be the  affine smooth $\Z$-group scheme of finite type  attached to a maximal order $\Lambda$ in the central simple $\Q$-algebra $M= M_2(D)$. Let $v \in V_{\Q, f}$ be a non-archimedean place.
  \begin{itemize}
\item[-] {If  $D$ splits at the place $v$, then $\vert G_{\Lambda}(\mathcal{O}_v/\frak{p}_v \mathcal{O}_v) \vert = q_v^{\dim G} \prod_{j = 2}^4(1 - q_v^{-j}).$}
\item[-] {If $D$ ramifies at the place $v$, then $ \vert G_{\Lambda}(\mathcal{O}_v/\frak{p}_v \mathcal{O}_v) \vert =  q_v^{\dim G} (1 + q_v^{-1})(1 - q_v^{-4})$}
\end{itemize}
where $\dim G = 15$ is the dimension of $G$.
Passing over to the principal congruence subgroup $\Gamma_{\Lambda}(\frak{p}_v^{e_v})$ we obtain for the index $[\Gamma_{\Lambda} : \Gamma_{\Lambda}( \frak{p}_v^{e_v})]$
  \begin{itemize}
\item[-] {If  $D$ splits at the place $v$, then $[\Gamma_{\Lambda} : \Gamma_{\Lambda}( \frak{p}_v^{e_v})]  = q_v^{e_v \cdot \dim G}  \prod_{j = 2}^4(1 - q_v^{-j}).$}
\item[-] {If $D$ ramifies at the place $v$, then $[\Gamma_{\Lambda} : \Gamma_{\Lambda}( \frak{p}_v^{e_v})]  = q_v^{e_v \cdot \dim G} (1 + q_v^{-1})(1 - q_v^{-4})$}
\end{itemize}
\end{lemma}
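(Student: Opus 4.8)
Fix the non-archimedean place $v$, write $q=q_v$ and $\Lambda_v:=\Lambda\otimes_{\Z}\mathcal{O}_v$, and recall that $\dim G=\dim SL_4=15$ because $G$ is an inner form of $SL_4$. The plan rests on two preliminary reductions. First, Lemma \ref{power} already gives
$$[\Gamma_{\Lambda}:\Gamma_{\Lambda}(\frak{p}_v^{e_v})]=q^{(e_v-1)\dim G}\,\vert G_{\Lambda}(\mathcal{O}_v/\frak{p}_v\mathcal{O}_v)\vert ,$$
so once the displayed value of $\vert G_{\Lambda}(\mathcal{O}_v/\frak{p}_v\mathcal{O}_v)\vert$ is established, the asserted formula for the congruence index follows by absorbing $q^{(e_v-1)\cdot 15}\cdot q^{15}=q^{e_v\cdot 15}$; hence it suffices to compute $\vert G_{\Lambda}(\mathbb{F}_q)\vert$, where $\mathbb{F}_q=\mathcal{O}_v/\frak{p}_v\mathcal{O}_v$. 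Second, since $SL_{\Lambda}$ is the scheme-theoretic kernel of $\det\colon GL_{\Lambda}\to\G_m$, one has $G_{\Lambda}(\mathbb{F}_q)=\ker\bigl(\det\colon GL_{\Lambda}(\mathbb{F}_q)\to\mathbb{F}_q^{\times}\bigr)$ and $GL_{\Lambda}(\mathbb{F}_q)=(\Lambda_v/\frak{p}_v\Lambda_v)^{\times}$; the map $\det$ on $\mathbb{F}_q$-points is surjective (clear on the split fibre; on the ramified fibre it follows from surjectivity of the reduced norm $\mathcal{O}_{C_v}^{\times}\to\mathcal{O}_v^{\times}$ reduced mod $\frak{p}_v$), so $\vert G_{\Lambda}(\mathbb{F}_q)\vert=\vert(\Lambda_v/\frak{p}_v\Lambda_v)^{\times}\vert/(q-1)$, and the problem becomes that of counting units in a finite ring. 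I would then distinguish the two local behaviours of $D$.

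If $D$ splits at $v$, then $M_v\cong M_4(\Q_v)$ and $\Lambda_v$, being a maximal order in $M_v$, is conjugate to $M_4(\mathcal{O}_v)$ (\cite{Re}); hence $G_{\Lambda}\times_{\Z}\mathcal{O}_v\cong SL_{4,\mathcal{O}_v}$, $G_{\Lambda}(\mathbb{F}_q)\cong SL_4(\mathbb{F}_q)$, and one reads off $\vert G_{\Lambda}(\mathbb{F}_q)\vert=q^{6}(q^2-1)(q^3-1)(q^4-1)=q^{15}\prod_{j=2}^4(1-q^{-j})$. If $D$ ramifies at $v$, then $M_v=M_2(C_v)$ for the quaternion division $\Q_v$-algebra $C_v$ with its unique maximal order $\mathcal{O}_{C_v}$, and $\Lambda_v$ is conjugate to $M_2(\mathcal{O}_{C_v})$ (\cite{Re}), so $(\Lambda_v/\frak{p}_v\Lambda_v)^{\times}=GL_2(R')$ with $R':=\mathcal{O}_{C_v}/\frak{p}_v\mathcal{O}_{C_v}$. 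Here I would use that $C_v$ is totally ramified of degree two, i.e. $\frak{p}_v\mathcal{O}_{C_v}=\frak{P}_{C_v}^{2}$ and $\mathcal{O}_{C_v}/\frak{P}_{C_v}\cong\mathbb{F}_{q^2}$, so that $R'$ is a finite local ring of order $q^{4}$ whose Jacobson radical $J=\frak{P}_{C_v}/\frak{P}_{C_v}^{2}$ satisfies $J^{2}=0$, $\vert J\vert=q^{2}$ and $R'/J\cong\mathbb{F}_{q^2}$. Reduction modulo the nilpotent ideal $M_2(J)$ of $M_2(R')$ yields a surjection $GL_2(R')\twoheadrightarrow GL_2(\mathbb{F}_{q^2})$ with kernel $1+M_2(J)$, a group of order $\vert M_2(J)\vert=q^{8}$, whence $\vert GL_2(R')\vert=q^{8}\,\vert GL_2(\mathbb{F}_{q^2})\vert=q^{8}\cdot q^{2}(q^{4}-1)(q^{2}-1)=q^{10}(q^{4}-1)(q^{2}-1)$; dividing by $q-1$ gives $\vert G_{\Lambda}(\mathbb{F}_q)\vert=q^{10}(q+1)(q^{4}-1)=q^{15}(1+q^{-1})(1-q^{-4})$, as asserted.

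The only step that takes genuine care is the ramified case: one must describe the non-commutative finite ring $R'=\mathcal{O}_{C_v}/\frak{p}_v\mathcal{O}_{C_v}$ correctly — its radical and residue field being governed by the total ramification $\frak{p}_v\mathcal{O}_{C_v}=\frak{P}_{C_v}^{2}$ together with the residue degree two — and one must check that the reduced norm still maps $GL_2(R')$ onto $\mathbb{F}_q^{\times}$, which I would deduce from surjectivity of the reduced norm on $\mathcal{O}_{C_v}^{\times}$ rather than by an explicit computation with the norm form. The rest is routine bookkeeping with the orders of $SL_4(\mathbb{F}_q)$ and $GL_2(\mathbb{F}_{q^2})$, matched against the stated products; the split case is then immediate.
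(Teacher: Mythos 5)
Your proposal is correct, and its skeleton coincides with the paper's: reduce everything to the order of $G_{\Lambda}(\mathcal{O}_v/\frak{p}_v\mathcal{O}_v)$ and then obtain the congruence index from Lemma \ref{power}, the split case being the standard count $\vert SL_4(\mathbb{F}_{q})\vert=q^{6}\prod_{j=2}^{4}(q^{j}-1)$. The genuine difference is in the ramified case: the paper simply cites Kionke's thesis for the value $q_v^{15}(1+q_v^{-1})(1-q_v^{-4})$, whereas you supply a complete self-contained computation. Your argument is sound: $\Lambda\otimes_{\Z}\mathcal{O}_v=M_2(\mathcal{O}_{C_v})$, the finite ring $R'=\mathcal{O}_{C_v}/\frak{p}_v\mathcal{O}_{C_v}$ is local with radical $J=\frak{P}_{C_v}/\frak{P}_{C_v}^{2}$ of order $q^{2}$, square zero, and residue ring $\mathbb{F}_{q^{2}}$, so $\vert GL_2(R')\vert=q^{8}\cdot q^{2}(q^{4}-1)(q^{2}-1)$; left-exactness of taking points identifies $SL_{\Lambda}(\mathbb{F}_q)$ with the kernel of the reduced norm on $GL_2(R')$, and surjectivity of that norm onto $\mathbb{F}_q^{\times}$ (via surjectivity of $\rn_{C_v/\Q_v}$ on $\mathcal{O}_{C_v}^{\times}$ and lifting through the smooth reduction) lets you divide by $q-1$, giving $q^{10}(q+1)(q^{4}-1)=q^{15}(1+q^{-1})(1-q^{-4})$ as required. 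What your route buys is independence from the external reference and an explicit justification of the surjectivity of $\det$ on residue points, a point the paper leaves implicit; what it costs is the extra care you yourself flag about the structure of the non-commutative finite ring $R'$, which the citation sidesteps. Both yield identical formulas, and your absorption of $q^{(e_v-1)\cdot 15}\cdot q^{15}=q^{e_v\cdot 15}$ reproduces the paper's second assertion exactly.
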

\begin{proof}Firstly, if $D$ splits at $v$, then $G_{\Lambda} \times_{\Z} \mathcal{O}_v $ is isomorphic to the $\mathcal{O}_v$-group scheme $SL_4$. The cardinality of its group of points over the finite field $\mathcal{O}_v/\frak{p}_v \mathcal{O}_v$ of cardinality  $q_v = p_v^f$, $p_v$ a prime, is given by
$$  \vert G_{\Lambda}(\mathcal{O}_v/\frak{p}_v \mathcal{O}_v) \vert = q_v^{\frac{4(4-1)}{2}} \prod_{j = 2}^4(q_v^j - 1) = q_v^{15} \prod_{j = 2}^4(1 - q_v^{-j}).$$
If $D$ ramifies at the place $v$, then, as determined 
 in \cite[V.4.7]{Kionkethesis},
 $ \vert G_{\Lambda}(\mathcal{O}_v/\frak{p}_v \mathcal{O}_v) \vert = q_v^{15} (1 + q_v^{-1})(1 - q_v^{-4}).$
 
 The second assertion regarding the index $[\Gamma_{\Lambda} : \Gamma_{\Lambda}( \frak{p}_v^{e_v})]$ follows by using Lemma \ref{power}.
\end{proof}

\subsection{Cohomology at infinity} As before, given a definite quaternion $\Q$-algebra $D$,  let $G = G_{\Lambda} \times_{\Z} \Q$ be the algebraic $\Q$-group associated with a maximal order $\Lambda$ in the central simple $\Q$-algebra $M_2(D)$ of degree four.
The quotient $X_G/\Gamma$, attched to  given a torsion-free arithmetic subgroup $\Gamma \subset \Gamma_{\Lambda} = G_{\Lambda}(\Z)$, 
 is a complete Riemannian manifold of finite volume but non-compact. Its cohomology is reflected in  the  long exact sequence in cohomology 
$$
\longrightarrow H^q(\overline{Y}_{\Gamma}, \partial \overline{Y}_{\Gamma}, \C) \longrightarrow 
H^q(\overline{Y}_{\Gamma}, \C)  \overset{r^q}{\longrightarrow} H^q(\partial \overline{Y}_{\Gamma}, \C) \longrightarrow  H^{q+1}(\overline{Y}_{\Gamma}, \partial \overline{Y}_{\Gamma}, \C) \longrightarrow
$$
 associated with the pair  $(\overline{Y}_{\Gamma}, \partial \overline{Y}_{\Gamma})$, see Theorem \ref{reduction} resp. Section \ref{gencoh}.
 We intend to determine the size of the image of the natural homomorphism
$r{^q}: H^q(\overline{Y}_{\Gamma}, \C)  \longrightarrow H^q(\partial \overline{Y}_{\Gamma}, \C)$ obtained by restriction.

In view of the isomorphism  $H^q(\overline{Y}_{\Gamma}, \C) \cong H^q(X_G/\Gamma, \C)$, the term $\dim \mathrm{im}(r^q)$ gives information about  the size of the ``cohomology of $\Gamma$ at infinity", that is, the space of classes in  $H^q(X_G/\Gamma, \C)$ which restrict non-trivially to 
$H^q(\partial \overline{Y}_{\Gamma}, \C)$.
In our discussion we only deal with the case of a definite $\Q$-algebra. We refrain from working out this topological approach in the case of an indefinite quaternion division $\Q$-algebra. In this case, the interpretation of $H^q(X_G/\Gamma, \C)$ in terms of automorphic forms, in particular, the use of Eisenstein series, gives better insight.
see Remark \ref{indef}.
 
 \begin{theorem}\label{final} Let $D$ be a definite quaternion $\Q$-algebra. Let $G = G_{\Lambda} \times_{\Z} \Q$ be the algebraic $\Q$-group associated with a maximal order $\Lambda$ in the central simple $\Q$-algebra $M_2(D)$.
 Given the principal congruence subgroup $\Gamma_{\Lambda}(\frak{p}^e) \subset \Gamma_{\Lambda} = G_{\Lambda}(\Z)$, $\frak{p}$ a prime which splits in $D$, 
 we write 
 $\sR^q(\Gamma_{\Lambda}(\frak{p}^e)): = \mathrm{im}(r^q: H^q(\overline{Y}_{\Gamma}, \C)  \longrightarrow H^q(\partial \overline{Y}_{\Gamma}, \C))$ for the  image of the map $r^q$. Then we have 
\begin{itemize}
\item[-] {$\dim \sR^0(\Gamma_{\Lambda}(\frak{p}^e)) = 1$, and $\sR^4(\Gamma_{\Lambda}(\frak{p}^e))$ is of codimension one in $H^*(\partial \overline{Y}_{\Gamma_{\Lambda}(\frak{p}^e)}, \C)$}
\item[-] {$\dim \sR^1(\Gamma_{\Lambda}(\frak{p}^e)) + \dim \sR^3(\Gamma_{\Lambda}(\frak{p}^e)) = \dim H^3(\partial \overline{Y}_{\Gamma_{\Lambda}(\frak{p}^e)}) =
4 h_D^2 N(\frak{p}^e)^{-4} [\Gamma_{\Lambda} : \Gamma_{\Lambda}(\frak{p}^e)]$}
\item[-] {$\dim \sR^2(\Gamma_{\Lambda}(\frak{p}^e))  = \frac{1}{2} \dim H^2(\partial \overline{Y}_{\Gamma_{\Lambda}(\frak{p}^e)}) =
3 h_D^2 N(\frak{p}^e)^{-4} [\Gamma_{\Lambda} : \Gamma_{\Lambda}(\frak{p}^e)]$}
\end{itemize}
 \end{theorem}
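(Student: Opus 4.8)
The plan is to assemble three results already at hand: the description of the boundary $\partial\overline{Y}_{\Gamma_{\Lambda}(\frak{p}^e)}$ from Proposition \ref{boundary}, the count of its connected components from Proposition \ref{cusps}, and the duality relation for the image of the restriction map from Proposition \ref{image}. Since $D$ is definite, Proposition \ref{boundary} asserts that $\partial\overline{Y}_{\Gamma_{\Lambda}(\frak{p}^e)}$ is a disjoint union of $4$-dimensional real tori, and, as $\frak{p}$ splits in $D$, Proposition \ref{cusps} gives that the number of these components equals $c := \mathrm{cusp}(G,\Gamma_{\Lambda}(\frak{p}^e)) = h_D^2\,N(\frak{p}^e)^{-4}\,[\Gamma_{\Lambda}:\Gamma_{\Lambda}(\frak{p}^e)]$.

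Next I would determine $H^{*}(\partial\overline{Y}_{\Gamma_{\Lambda}(\frak{p}^e)},\C)$. The Künneth formula gives $\dim H^{j}(T^{4},\C) = \binom{4}{j}$, and cohomology is additive over disjoint unions, so $\dim H^{j}(\partial\overline{Y}_{\Gamma_{\Lambda}(\frak{p}^e)},\C) = c\binom{4}{j}$; that is, the Betti numbers are $c,4c,6c,4c,c$ in degrees $0,1,2,3,4$.

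Now I would apply Proposition \ref{image}. In the definite case one has $d(G) = 5$, so the relation reads $\dim\sR^{q} + \dim\sR^{4-q} = \dim H^{q}(\partial\overline{Y}_{\Gamma_{\Lambda}(\frak{p}^e)},\C)$ for every $q$. Taking $q = 2$ yields $2\dim\sR^{2} = 6c$, hence $\dim\sR^{2} = 3c$ --- the third assertion. Taking $q = 1$ yields $\dim\sR^{1} + \dim\sR^{3} = 4c$, which, since $\dim H^{1} = \dim H^{3} = 4c$, is the second assertion. For the first assertion, $q = 0$ gives $\dim\sR^{0} + \dim\sR^{4} = c$; since $X_{G} = \mathrm{H}^{5}$ is contractible, $X_{G}/\Gamma_{\Lambda}(\frak{p}^e)$ --- hence $\overline{Y}_{\Gamma_{\Lambda}(\frak{p}^e)}$ --- is connected, so $H^{0}(\overline{Y}_{\Gamma_{\Lambda}(\frak{p}^e)},\C) = \C$ and the restriction of the constant class is nonzero, whence $\dim\sR^{0} = 1$ and therefore $\dim\sR^{4} = c - 1$, i.e. $\sR^{4}$ has codimension one in $H^{4}(\partial\overline{Y}_{\Gamma_{\Lambda}(\frak{p}^e)},\C)$; this also matches Remark \ref{codim}.

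This is essentially bookkeeping once the structural results are in place. The only point needing a word of care is that Proposition \ref{image} presupposes $\overline{Y}_{\Gamma_{\Lambda}(\frak{p}^e)}$ to be an oriented compact manifold with boundary, which is built into the setup of Theorem \ref{reduction} (as $\mathrm{H}^{5}$ is orientable and $\Gamma_{\Lambda}(\frak{p}^e)$ acts freely and preserves orientation). I do not anticipate a genuine obstacle beyond this; in particular, note that the topological argument separates $\dim\sR^{2}$ completely but only pins down the sum $\dim\sR^{1}+\dim\sR^{3}$, the individual values requiring the automorphic input mentioned in Remark \ref{Eis}.
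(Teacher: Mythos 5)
Your proposal is correct and follows exactly the route of the paper's own proof: the boundary is a disjoint union of $c = h_D^2 N(\frak{p}^e)^{-4}[\Gamma_{\Lambda}:\Gamma_{\Lambda}(\frak{p}^e)]$ four-dimensional tori (Propositions \ref{boundary} and \ref{cusps}), the Betti numbers $c\binom{4}{j}$ come from $H^*(T^4,\C)\cong\bigwedge^*\C^4$, and the relation $\dim\sR^{q}+\dim\sR^{4-q}=\dim H^{q}(\partial\overline{Y},\C)$ of Proposition \ref{image} together with Remark \ref{codim} yields all three assertions. Your write-up merely makes explicit the bookkeeping that the paper's terse proof leaves to the reader, including the correct observation that only the sum $\dim\sR^{1}+\dim\sR^{3}$ is determined by this topological argument.
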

 \begin{proof} Using Proposition \ref{image}, the first assertion follows by remark \ref{codim}.  Next, the boundary $\partial \overline{Y}_{\Gamma_{\Lambda}(\frak{p}^e)}$
 is a disjoint union of $4$-dimensional tori, and the cohomology $H^*(T^4, \C)$ is isomorphic to the exterior algebra $\bigwedge^* \C^4$. Then, again using Proposition \ref{image}, the other assertions follow from Proposition \ref{cusps}.
 \end{proof}
 
 \begin{remark}In the expressions  given above for $\dim \sR^q(\Gamma_{\Lambda}(\frak{p}^e))$ one may replace the index $[\Gamma_{\Lambda} : \Gamma_{\Lambda}(\frak{p}^e)]$ by the formulae as given in Lemma \ref{index}. This allows us to give (up to a positive constant) the weaker bound $[\Gamma_{\Lambda} : \Gamma_{\Lambda}(\frak{p}^e)]^{11/15}$. However, this term determines the growth of the dimensions in question within a $\frak{p}$-congruence tower of arithmetic groups.
  \end{remark}
  
  \begin{remark}
The view on the isometry group of hyperbolic $5$-space via Hamilton quaternions and arithmetically defined subgroups is also taken up in \cite{Keller} or \cite{Parpau}.
 Within the framework of the theory of integral binary Hamiltonian forms, using the notion of the distance to a cusp, one finds in \cite{Parpau}  a treatment of the necessary reduction theory for $\Gamma_{\Lambda}$, much in the spirit of the work of Mendoza \cite{Me} in the case of the special linear group $SL_2$ over the ring of integers of an imaginary quadratic field. The volume of $X_G/\Gamma_{\Lambda}$ is determined by Emery in \cite{Emery} or \cite{ParEmery}.
  \end{remark}
  
  \begin{remark}\label{indef}In the case of an indefinite quaternion division $\Q$-algebra the situation is a bit more complicated, due to the fact that a boundary component of the $9$-dimensional manifold $X_G/\Gamma$  is a fibre bundle with base space a compact arithmetic $4$-dimensional quotient, say $Z_L/\Gamma_L(\frak{p}^e)$, with fibre a $4$-dimension torus. The spectral sequence in cohomology attached to this fibre bundle  degenerates at $E_2$ (\cite[Thm. 2.7]{S2}. Therefore the cohomology of a boundary component is given by the cohomology 
 $H^*(Z_L/\Gamma_L(\frak{p}^e), \tilde{H}^*(T^4, \C))$ of the base space with coefficients in the local system determined by the cohomology of the fibre. 
 Using Propositions \ref{cusps} and \ref{image} and taking into account the dimension of  $H^*(Z_L/\Gamma_L(\frak{p}^e), \tilde{H}^*(T^4, \C))$ one can derive assertions analogous to the 
 ones in Theorem \ref{final}.
 \end{remark}
 
  \begin{remark}\label{Eis} In both cases, within the interpretation of the cohomology  $H^q(X_G/\Gamma, \C)$ in terms of automorphic forms, by passing over to the projective limit over all congruence subgroups in the adelic framework, 
one can use the theory of Eisenstein series to describe the cohomology at infinity  (see, e.g., \cite{GrSch}, \cite{S2}). In particular, the possible existence of residues of Eisenstein  series explains the various dualities between degrees. We deal with this approach via automorphic forms in the  more general context of totally definite quaternion division algebras over a totally real algebraic number field in \cite{SchEis}.

From this point of view, the results in \cite{SchPac} show that there exist cohomology classes in $H^q(X_G/\Gamma, \C)$  which do not belong to the cohomology at infinity. Indeed, these classes are represented by cuspidal automorphic forms of different type.
\end{remark}

\subsection{Examples}\label{examples}
To have a family of examples at hand, we determine all quaternion algebras $Q$ over the field $\Q$ of rational numbers which ramify exactly at a given prime $p$ and
the unique archimedean place, to be denoted $\infty$. A non-archimedean place $v \in V_{\Q}$ corresponds to a unique  prime in $\Z$, and $\Q_v$ is the field $\Q_p$ of $p$-adic numbers. Using the  device of the Hilbert symbol and the related reciprocity law one derives the following (see, e.g., \cite[14.2]{Voight}):

\begin{proposition} Given a prime $p$ the quaternion algebras $Q(-1, -1\vert \Q)$ if $p =2$ resp. $Q(-1, -p \vert \Q)$ if $p \equiv 3 \mod 4$
resp. $Q(-q, -p \vert \Q)$ if $p \equiv 1 \mod 4$, where $q$ is a prime such that $q \equiv 3 \mod 4$ and $q$ is not a quadratic residue $\mod p$ ramify exactly at the places
 $\{\infty, p\}$. Each quaternion $\Q$-algebra $Q$ whose ramification set is  $\{\infty, p\}$ is isomorphic to one of the quaternion algebras as listed.
\end{proposition}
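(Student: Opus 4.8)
The plan is to combine the classification of quaternion $\Q$-algebras by their ramification sets with explicit Hilbert symbol computations. Recall from the discussion above that a quaternion $\Q$-algebra is determined up to isomorphism by the set $\mathrm{Ram}(Q)$, that $\mathrm{Ram}(Q)$ has even cardinality, and that conversely every finite set of places of even cardinality occurs as a ramification set. Consequently the uniqueness assertion is automatic once we show that each algebra $Q(a,b\vert\Q)$ appearing in the list satisfies $\mathrm{Ram}(Q(a,b\vert\Q)) = \{\infty, p\}$; so this is what we aim to verify. For this we use that a place $v$ is ramified in $Q(a,b\vert\Q)$ precisely when the Hilbert symbol $(a,b)_v$ equals $-1$, while Hilbert reciprocity $\prod_{v\in V_\Q}(a,b)_v = 1$ serves as a built-in consistency check.

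The argument then reduces to a place-by-place evaluation of $(a,b)_v$, and most places are handled in bulk. At the archimedean place both arguments are negative in each of the three cases, so $(a,b)_\infty = -1$ and $\infty$ is ramified. At an odd prime $\ell$ dividing neither $a$ nor $b$ -- hence any $\ell \notin\{2,p\}$, and also $\ell\neq q$ when $p\equiv 1\pmod 4$ -- both arguments are $\ell$-adic units and $(a,b)_\ell = 1$. Thus only the places $2$, $p$, and (when present) $q$ require individual attention. For $p=2$ the $2$-adic formula gives $(-1,-1)_2 = -1$, so $2$ ramifies and no other finite place does. For $p\equiv 3\pmod 4$ one finds $(-1,-p)_2 = 1$, using $-1\equiv 3$ and $-p\equiv 1 \pmod 4$, together with $(-1,-p)_p = \bigl(\tfrac{-1}{p}\bigr) = -1$. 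For $p\equiv 1\pmod 4$ one gets $(-q,-p)_2 = 1$, since $-q\equiv 1\pmod 4$; moreover $(-q,-p)_p = \bigl(\tfrac{-q}{p}\bigr) = \bigl(\tfrac{q}{p}\bigr) = -1$ because $\bigl(\tfrac{-1}{p}\bigr)=1$ and $q$ is a non-residue modulo $p$; and $(-q,-p)_q = \bigl(\tfrac{-p}{q}\bigr) = \bigl(\tfrac{-1}{q}\bigr)\bigl(\tfrac{p}{q}\bigr)$, where $q\equiv 3\pmod 4$ gives $\bigl(\tfrac{-1}{q}\bigr) = -1$ and quadratic reciprocity together with $p\equiv 1\pmod 4$ gives $\bigl(\tfrac{p}{q}\bigr) = \bigl(\tfrac{q}{p}\bigr) = -1$, so that $(-q,-p)_q = 1$ and $q$ does not ramify. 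In each case $\mathrm{Ram} = \{\infty,p\}$, an even set, consistent with reciprocity, which combined with the classification completes the proof.

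Finally, for $p\equiv 1\pmod 4$ one must know that such an auxiliary prime $q$ exists: the two conditions ``$q\equiv 3\pmod 4$'' and ``$q$ is a quadratic non-residue modulo $p$'' pin $q$ down, via the Chinese remainder theorem, to a non-empty union of residue classes modulo $4p$ coprime to $4p$, and Dirichlet's theorem on primes in arithmetic progressions supplies infinitely many primes in such classes; any of them works, being automatically coprime to $2p$ and hence distinct from $2$ and from $p$. The only genuinely delicate points in this scheme are the $2$-adic Hilbert symbol computations, whose value depends on residues modulo $8$ rather than on whether a unit is a square, and the reciprocity bookkeeping in the case $p\equiv 1\pmod 4$, where the whole purpose of the conditions imposed on $q$ is precisely to prevent $q$ from becoming a spurious ramified place.
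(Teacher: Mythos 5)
Your proof is correct, and it follows exactly the route the paper indicates: a place-by-place evaluation of Hilbert symbols combined with Hilbert reciprocity and the classification of quaternion $\Q$-algebras by their ramification sets. The paper itself does not write out these computations but simply cites \cite[14.2]{Voight}; your argument supplies precisely the standard verification that reference contains, including the correct treatment of the auxiliary prime $q$ in the case $p\equiv 1 \pmod 4$.
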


In these cases, the Eichler class number formula gives the exact value of the class number $h_D$ of $D$ (see \cite[Thm.30.1.5]{Voight}). Written out in terms of congruence conditions for the discriminant $p$ of $D$ (\cite[Exercise 30.6]{Voight}) it takes the form 
\begin{equation*}
h_D = \begin{cases}
1 & \text{if} \; p = 2, 3, \\
(p - 1)/12 &\text{if} \;p \equiv 1 \pmod {12}\\
(p + 7)/12 & \text{if}\; p \equiv  5 \pmod{12}\\
(p + 5)/12 & \text{if} \;p \equiv 7 \pmod{12}\\
(p + 13)/12&  \text{if} \;p \equiv  11 \pmod{12}.
\end{cases}
\end{equation*}
Examples of maximal orders in these definite quaternion $\Q$-algebras are given in \cite[15.5.7]{Voight} resp.  \cite{Ibu}.

\begin{remark} We make explicit in which cases the class number $h_D$ is equal to  one. Beyond the cases $p = 2, 3$, these are the following definite quaternion 
$\Q$-algebras
\begin{equation*}
\begin{cases}
p = 5 & D = Q(-q, -5 \vert \Q),  q \equiv 3 \pmod 4, q \;\text{non-residue} \pmod{5}\\
p = 7 & D = Q(-1, -7\vert \Q)\\
p = 13 & D = Q(-q, -13 \vert \Q),  q \equiv 3 \pmod 4, q \;\text{non-residue} \pmod{13}.
\end{cases}
\end{equation*}
Given any maximal order $\Lambda$ in the central simple algebra $M_2(D)$, $D$ as given in the list, the number of $\Gamma_{\Lambda}$-conjugacy classes of minimal parabolic $\Q$-subgroups of 
$G = SL_{\Lambda} \times_{\Z} \Q$ is equal to one. It follows that 
the corresponding $5$-dimensional orbifold $X_G/\Gamma_{\Lambda}$ has exactly one end.
This might be of interest in the context of determining one-cusped arithmetically defined hyperbolic orbifolds, see \cite[Sect. 5]{Stover}.

\end{remark}

\bibliographystyle{amsalpha}

  \section{conflicts of interest}
 The author has no competing interests to declare that are relevant to the content of this article.
 
 \end{document}